\newcommand{\no}[1]{#1}
\renewcommand{\no}[1]{}
\renewcommand{\Delta}{\upDelta}
\newtheorem{theorem}{Theorem}
\newtheorem{proposition}{Proposition}
\newtheorem{lemma}{Lemma}
\newtheorem{corollary}{Corollary}
\theoremstyle{remark}
\newtheorem{remark}{Remark}
\DeclareMathOperator{\Vol}{Vol}
\DeclareMathOperator{\supp}{supp}
\DeclareMathOperator{\WF}{WF}
\DeclareMathOperator{\dist}{dist}
\newcommand{\eps}{\varepsilon}
\newcommand{\R}{{\bf R}}
\newcommand{\Id}{\mbox{Id}}
\renewcommand{\r}[1]{(\ref{#1})}
\newcommand{\PDO}{$\Psi$DO}
\newcommand{\be}[1]{\begin{equation}\label{#1}}
\newcommand{\ee}{\end{equation}}
\renewcommand{\d}{\mathrm{d}}
\renewcommand{\i}{\mathrm{i}}
\newcommand{\bo}{\partial \Omega}
\title[Thermoacoustic tomography]{Thermoacoustic tomography with variable sound speed}
\author[P. Stefanov]{Plamen Stefanov}
\address{Department of Mathematics, Purdue University, West Lafayette, IN 47907}
\thanks{First author partly supported by a NSF  Grant DMS-0800428}
\author[G. Uhlmann]{Gunther Uhlmann}
\address{Department of Mathematics, University of Washington, Seattle, WA 98195}
\thanks{Second author partly supported by a NSF FRG grant No.~0554571 and a Walker Family Endowed Professorship}
\begin{document}
\maketitle

\begin{abstract}
We study the mathematical model of thermoacoustic tomography in media with a variable speed for a fixed time interval $[0,T]$ so that all signals issued from the domain leave it after time $T$. In case of measurements on the whole boundary, we give an explicit solution in terms of a Neumann series expansion. We give almost necessary and sufficient conditions for uniqueness and stability when the measurements are taken on a part of the boundary. 
\end{abstract}

\maketitle

\section{Introduction}  
In thermoacoustic tomography, a short electro-magnetic  pulse is sent through a patient's body. The tissue reacts and emits an ultrasound wave from any point, that is measured away from the body. Then one tries to reconstruct the internal structure of a patient's body form those measurements, see e.g, \cite{Haltmeier04, Haltmeier05, Kruger03, Kruger99,XuWang06}. For more detail, an extensive list of references, and the recent progress in the mathematical understanding of this problem, we refer to \cite{AgrKuchKun2008,FinchRakesh08,Hristova08, HristovaKu08,KuchmentKun08, Patch04}. Both constant and non-constant sound speeds have been studied and naturally, the results are more complete in the constant speed case. 

The purpose of this work is to study this problem under the assumption of a variable speed. We will actually  formulate the problem in anisotropic media. 
Let $g$ be a  Riemannian metric in $\R^n$,  let $a$ be a  vector field, and let  $c>0$, $q\ge0$ be  functions, all smooth and real valued. Assume for convenience that $g$ is Euclidean outside a large compact, and $c-1=q=a=0$ there (since we work with $t$ in a fixed interval, by the finite speed of propagation, this assumption is not essential). 
Let $P$ be the differential operator
\be{P}
P = c^2\frac{1}{\sqrt{\det g}}\left(\frac1\i \frac{\partial}{\partial x^i }+ a_i \right)g^{ij}\sqrt{\det g} 
\left(\frac1\i \frac{\partial}{\partial x^j }+ a_j \right)
+q .
\ee
Let $u$ solve the problem
\begin{equation}   \label{1}
\left\{
\begin{array}{rcll}
(\partial_t^2 +P)u &=&0 &  \mbox{in $(0,T)\times \R^n$},\\
u|_{t=0} &=& f,\\ \quad \partial_t u|_{t=0}& =&0, 
\end{array}
\right.               
\end{equation}
where $T>0$ is fixed. 

Assume that $f$ is supported in $\bar\Omega$, where $\Omega\subset \R^n$ is some smooth bounded domain. The measurements are modeled by the operator
\be{1b}
\Lambda f : = u|_{[0,T]\times\partial\Omega}.
\ee
The problem is to reconstruct the unknown $f$. 

The presence of the magnetic field $\{a_j\}$ is perhaps of no interest for applications but it does not cause any additional  difficulties. 

If $T=\infty$, then one can solve a problem with Cauchy data $0$ at $t=\infty$ (as a limit), and boundary data $h=\Lambda f$. The zero Cauchy data are justified by  local energy decay that  holds for non-trapping geometry, for example (actually, it is always true but much weaker and not uniform in general). Then solving the resulting problem backwards recovers $f$. Now, based on that, one can show that for a fixed $T$, one can still do the same thing with an error $\epsilon(T)\to0$,  as $T\to\infty$. This is known as the  time reversal method. In the non-trapping case, $n$ odd, the error is uniform and $\epsilon(T)=O(e^{-T/C})$. There is no good control over $C$ though. Error estimates based on local energy decay can be found in \cite{Hristova08}, see also Corollary~\ref{cor_est}. Other reconstruction methods have been used as well, see, e.g., \cite{HristovaKu08} for a discussion, and they all use measurements for all $t$ in the variable coefficients case, i.e., $T=\infty$; and they are only approximate for $T<\infty$ with an error depending on the local energy decay rate. Of course, if $n$ is odd  and $P=-\Delta$, any finite $T>\mbox{diam}(\Omega)$ suffices by the Huygens'  principle. 

We refer to Section~\ref{sec_incomplete} for a discussion of uniqueness results. 

In this paper, we want to study what happens when $T<\infty$ is fixed,  greater than the length of the longest geodesic in $\Omega$ (thus the metric $c^{-2}g$ is assumed to be non-trapping). In case of measurements on the whole boundary, our main result is that the problem is Fredholm, uniquely solvable, and can be solved explicitly with a Neumann series expansion. In case of partial data, in Section~\ref{sec_incomplete} we give an almost necessary and sufficient condition for uniqueness, and another almost  necessary and sufficient condition for stability. In Proposition~\ref{pr_FIO} we characterize  $\Lambda$ as a sum of two Fourier Integral Operators with canonical relations of graph type. 

\section{Complete data}
Notice first that $P$ is formally self-adjoint w.r.t.\ the measure $c^{-2}\d\Vol$, where $\d\Vol(x) = \sqrt{\det g}\, \d x$.  
Given a domain $U$, and a function $u(t,x)$, define the energy
\[
E_U(t,u) = \int_U\left( |Du|^2  +c^{-2}q|u|^2 +c^{-2}|u_t|^2 \right)\d\Vol,
\]
where $D_j= -\i\partial/\partial x^j+a_j$, $D=(D_1,\dots,D_n)$, $|Du|^2=g^{ij}(D_iu)(D_ju)$, and $\d\Vol(x) = (\det g)^{1/2}\d x$. 
In particular, we define the space $H_{D}(U)$ to be the completion of $C_0^\infty(U)$ under the Dirichlet norm
\be{2.0H}
\|f\|_{H_{D}}^2= \int_U \left(|Du|^2 +c^{-2}q|u|^2 \right)\,\d\Vol.
\ee
It is easy to see that $H_{D}(U)\subset H^1(U)$, if $U$ is bounded with smooth boundary, therefore, $H_{D}(U)$ is topologically equivalent to $H_0^1(U)$. If $U=\R^n$, this is true for $n\ge3$ only, if $q=0$.  By the finite speed of propagation, the solution with compactly supported Cauchy data always stays in $H^1$ even when $n=2$.  
The energy norm for the Cauchy data $(f,\psi)$, that we denote by $\|\cdot\|_{\mathcal{H}}$ is then defined by
\[
\|(f,\psi)\|^2_{\mathcal{H}} = \int_U\left( |Df|^2 +c^{-2}q|f|^2  +c^{-2}|\psi|^2 \right)\d\Vol.
\]
This defines the energy space 
\[
\mathcal{H}(U) = H_D(U)\oplus L^2(U).
\] 
Here and below, $L^2(U) = L^2(U; \; c^{-2}\d\Vol)$. Note also that 
\be{Pf}
\|f\|^2_{H_D} = (Pf,f)_{L^2}.
\ee
The wave equation then can be written down as the system
\be{s1}
\mathbf{u}_t= \mathbf{P}\mathbf{u}, \quad \mathbf{P} = \begin{pmatrix} 0&I\\P&0 \end{pmatrix},
\ee
where $\mathbf{u}=(u,u_t)$ belongs to the energy space $\mathcal{H}$. The operator $\mathbf{P}$ then extends naturally to a skew-selfadjoint operator on $\mathcal{H}$. In this paper, we will deal with either $U=\R^n$ or $U=\Omega$. In the latter case, the definition of $H_D(U)$ reflects Dirichlet boundary conditions. 

One method to get an approximate solution of the thermoacoustic problem is the following time reversal method, that is actually used in a modified form, see the comments below. Given $h$, let $v_0$ solve
\begin{equation}   \label{2.1}
\left\{
\begin{array}{rcll}
(\partial_t^2 + P)v_0 &=&0 &  \mbox{in $(0,T)\times \Omega$},\\
v_0|_{[0,T]\times\partial\Omega}&= &h,\\
v_0|_{t=T} &=& 0,\\ \quad   \partial_t v_0|_{t=T}& =&0. \\
\end{array}
\right.               
\end{equation}
Then we define the following ``approximate inverse''
\[
A_0 h := v_0(0,\cdot) \quad \mbox{in $\bar\Omega$}.
\]
Then $A_0\Lambda f$ is viewed as an approximation to $f$. As we mentioned above, that is actually true asymptotically as $T\to\infty$, with the modified version of the time reversal method described below, (see \cite{Hristova08}) but $T$ is fixed in our analysis. 

In this form, the time reversal method has the following downside: $h$ may not vanish on $\{T\}\times \bo$, therefore the mixed problem above has boundary data with a possible  jump type of singularity at $\{T\}\times\bo$ (the compatibility conditions might be violated). That singularity will propagate back to $t=0$ and will  affect $v_0$, and then $v_0$ may not be in the energy space. The operator $A_0\Lambda$ may fail to be Fredholm or even bounded then, and in particular $A_0\Lambda f$ might be more singular than $f$. For this reason, $h$ is usually  cut off smoothly near $t=T$, i.e., $h$ is replaced by $\chi(t) h(t,x)$, where $\chi\in C^\infty(\R)$, $\chi=0$ for $t=T$, and $\chi=1$ in a neighborhood of  $(-\infty, T(\Omega))$, see e.g., \cite[Section~2.2]{Hristova08}.

We will modify this approach in a way that would make the problem Fredholm, and will make the error operator a contraction. 
To this end, we proceed as follows. Given $h$ (that eventually will be replaced by $\Lambda f$), solve
\begin{equation}   \label{2}
\left\{
\begin{array}{rcll}
(\partial_t^2 + P)v &=&0 &  \mbox{in $(0,T)\times \Omega$},\\
v|_{[0,T]\times\partial\Omega}&= &h,\\
v|_{t=T} &=& \phi,\\ \quad \partial_t v|_{t=T}& =&0, \\
\end{array}
\right.               
\end{equation}
where $\phi$ solves the elliptic boundary value problem
\be{3}
P\phi=0, \quad 
\phi|_{\partial\Omega} = h(T,\cdot).
\ee 
Since $P$ is a positive operator, $0$ is not a Dirichlet eigenvalue of $P$ in $\Omega$, and therefore \r{3} is uniquely solvable. Note that the initial data at $t=T$ satisfy compatibility conditions of first order (no jump at $\{T\}\times\bo$). 
Then we define the following pseudo-inverse
\be{4}
A h := v(0,\cdot) \quad \mbox{in $\bar\Omega$}.
\ee
The operator $A$ maps continuously the closed subspace of $H^1([0,T]
\times \bo)$ consisting of functions that vanish at $t=T$ (compatibility condition) to $H^1(\Omega)$, see \cite{LasieckaLT}. 
It also sends the range of $\Lambda$ to $H^1_0(\Omega) \cong H_D(\Omega)$, as the proof below indicates.

In the next theorem and everywhere below, $T(\Omega)$ is the supremum of the lengths of all geodesics of the metric $c^{-2}g$ in $\bar\Omega$. Also, $\dist(x,y)$ denotes the distance function in that metric. We then call $(\Omega, c^{-2}g)$ non-trapping, if $T(\Omega)<\infty$. 

\begin{theorem}  \label{thm2.1} Let $(\Omega, c^{-2}g)$ be non-trapping, and let $T>T(\Omega)$. Then 
$A\Lambda=\Id-K$, where $K$ is compact in $H_{D}(\Omega)$, and   $\|K\|_{H_{D}(\Omega)}<1$. 
In particular, $\Id-K$ is invertible on $H_{D}(\Omega)$, and the inverse thermoacoustic problem has an explicit solution of the form
\be{2.2}
f = \sum_{m=0}^\infty K^m A h, \quad h:= \Lambda f.
\ee
\end{theorem}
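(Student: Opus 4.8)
The plan is to run the forward solution and compare it with the output of the reconstruction operator $A$. Let $u$ solve \r{1} with Cauchy data $(f,0)$, set $h=\Lambda f$, and let $v,\phi$ be as in \r{2}, \r{3}. Put $w=u-v$ on $(0,T)\times\Omega$. Since $h=u|_{[0,T]\times\bo}$, the function $w$ solves the wave equation in $\Omega$ with \emph{homogeneous} Dirichlet data, and its Cauchy data at $t=T$ is $(u(T,\cdot)-\phi,\,u_t(T,\cdot))$. By \r{4}, $Ah=v(0,\cdot)$, so
\[
Kf=f-A\Lambda f=u(0,\cdot)-v(0,\cdot)=w(0,\cdot),
\]
and $w(0,\cdot)\in H_D(\Omega)$ because both $f$ and $v(0,\cdot)|_{\bo}=h(0,\cdot)=f|_{\bo}$ vanish on $\bo$.

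Next I would derive an energy identity. Energy is conserved for $u$ on $\R^n$, and, because $w$ has homogeneous lateral data, for $w$ on $\Omega$ (in both cases $\mathbf P$ is skew-selfadjoint on the relevant energy space). Writing $u(T,\cdot)=w(T,\cdot)+\phi$ with $w(T,\cdot)\in H_0^1(\Omega)$ and $P\phi=0$, the two summands are orthogonal in the Dirichlet form $(P\,\cdot\,,\cdot)_{L^2}$ of \r{Pf}: integrating by parts, the boundary term drops since $w(T,\cdot)$ vanishes on $\bo$, while $w(T,\cdot)-\phi=u(T,\cdot)$ matches boundary values. Combining conservation of $E_{\R^n}(\cdot,u)$, conservation of $E_\Omega(\cdot,w)$, and this orthogonality, I expect the clean identity
\[
\|f\|_{H_D(\Omega)}^2-\|Kf\|_{H_D(\Omega)}^2 = E_{\R^n\setminus\bar\Omega}(T,u)+\textstyle\int_\Omega(|D\phi|^2+c^{-2}q|\phi|^2)\,\d\Vol+\|w_t(0,\cdot)\|_{L^2(\Omega)}^2 .
\]
All three terms on the right are nonnegative, so $\|K\|_{H_D(\Omega)}\le1$.

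For compactness I would use propagation of singularities. Since $(\Omega,c^{-2}g)$ is non-trapping and $T>T(\Omega)$, every geodesic has already left $\bar\Omega$ before time $T$, so all singularities of $f$ have exited $\bar\Omega$ at $t=T$; hence $u(T,\cdot)$ and $u_t(T,\cdot)$ are smooth on $\bar\Omega$, the harmonic extension $\phi$ is smooth, and the backward Dirichlet solution $w$, whose data satisfy the first-order compatibility condition built into \r{3}, is regular up to $t=0$. Thus $K$ is smoothing, hence compact on $H_D(\Omega)$ by Rellich.

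Finally, for strict inequality, note that a compact $K$ with $\|K\|=1$ attains its norm at some $f\neq0$ (pass to a weak limit of a maximizing sequence of the unit ball and use compactness). For that $f$ the identity forces all three right-hand terms to vanish: $\phi=0$, $w_t(0,\cdot)=0$, and $u(T,\cdot)=u_t(T,\cdot)=0$ on $\R^n\setminus\bar\Omega$; with $\phi=0$ this gives $u(T,\cdot)\in H_0^1(\Omega)$. So $(u(T,\cdot),u_t(T,\cdot))$ is supported in $\bar\Omega$, as is $(u(0,\cdot),u_t(0,\cdot))=(f,0)$. I would then invoke unique continuation for $\partial_t^2+P$ (the coefficients are $t$-independent, hence analytic in $t$, so Tataru--H\"ormander applies): $u$ vanishes on the open exterior region $\{\dist(x,\bo)>\min(t,T-t)\}$, and because $T>T(\Omega)$ the backward cones from this region cover $[0,T]\times\bar\Omega$, forcing $u\equiv0$ and thus $f=0$ --- a contradiction. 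Hence $\|K\|<1$, $\Id-K$ is invertible, and \r{2.2} follows from its Neumann series. The step I expect to be the main obstacle is precisely this last one: verifying that the support conditions at $t=0$ and $t=T$, together with $T>T(\Omega)$, feed a unique continuation theorem at the \emph{sharp} time, i.e.\ that the non-trapping threshold $T(\Omega)$ is exactly what makes the continuation reach all of $\bar\Omega$. The compactness claim also requires some care at the corners $\{0,T\}\times\bo$, where only first-order compatibility is available.
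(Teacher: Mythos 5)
Your skeleton is the paper's: the decomposition $u=v+w$ with $Kf=w(0,\cdot)$, the orthogonality $(u(T,\cdot)-\phi,\phi)_{H_D(\Omega)}=0$, the resulting energy identity (the paper writes it as a chain of inequalities; your three-term identity is an equivalent, slightly cleaner packaging), and the reduction of $\|K\|<1$ to norm attainment for compact operators (the paper uses the top eigenvalue of $K^*K$) are all correct. But the two steps you flagged as delicate are not merely delicate --- as written, both are wrong, and they are exactly where the paper's proof has content.

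\textbf{Compactness.} You assert that first-order compatibility makes $w$ ``regular up to $t=0$,'' hence $K$ smoothing, hence compact by Rellich. This fails: compatibility holds only for the position component, since $w(T,\cdot)=u(T,\cdot)-\phi$ vanishes on $\bo$ but $w_t(T,\cdot)=u_t(T,\cdot)$ does not. So the Cauchy data of $w$ at $t=T$ are smooth on $\bar\Omega$ but do not lie in the domain $(H^2\cap H^1_0)\oplus H^1_0$ of the generator of the Dirichlet wave group; singularities emanate from the corner $\{T\}\times\bo$, and $w(0,\cdot)=Kf$ is in general not in $H^2(\Omega)$. The paper's remark immediately following the theorem states precisely this: $K$ was \emph{not} shown to be smoothing, and making it smoothing would require modifying $w_t(T,\cdot)$ as well. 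The repair uses only what you already have: the map $f\mapsto(w(T,\cdot),w_t(T,\cdot))$ has a smooth Schwartz kernel on $\bar\Omega$, hence is compact from $H_D(\Omega)$ into the energy space $H_D(\Omega)\oplus L^2(\Omega)$, and the backward Dirichlet propagator is unitary on that space; a compact operator followed by a bounded one is compact, even though $K$ itself is not smoothing.

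\textbf{Unique continuation.} The claim that ``the backward cones from $\{\dist(x,\bo)>\min(t,T-t)\}$ cover $[0,T]\times\bar\Omega$'' is false, and no application of Theorem~\ref{thm_uq} to that region alone can give $f=0$. Indeed, fix an exterior point $x_0$ and let $d=\dist(x_0,\Omega)$. Your region only gives $u(\cdot,x_0)=0$ on $\{t\in[0,T]:\min(t,T-t)<d\}$, whose longest centered subinterval has half-length $d/2$ (if $d\le T/2$) or $T/2$ (if $d>T/2$); in either case the half-length is $<d$, so every double cone $\{|t-t_0|+\dist(x_0,x)\le S\}$ produced by Theorem~\ref{thm_uq} is disjoint from $\bar\Omega$. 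The cones cover nothing inside $\Omega$, and the hypothesis $T>T(\Omega)$ never enters. What is missing is the paper's actual argument: (i) the two domain-of-dependence statements, $u=0$ for $\dist(x,\Omega)>|t|$ and $u=0$ for $\dist(x,\Omega)>|T-t|$, hold for \emph{all} $t\in\R$, not just $t\in[0,T]$; (ii) when $d>T/2$ the corresponding intervals $(-d,d)$ and $(T-d,T+d)$ overlap, so $u(\cdot,x_0)=0$ on $(-d,T+d)$; (iii) $u$ is even in $t$ (because $u_t(0,\cdot)=0$ and $P$ is $t$-independent), which reflects this vanishing to the symmetric interval $(-T-d,T+d)$, of half-length $T+d>3T/2$, centered at $t=0$; (iv) now Theorem~\ref{thm_uq}, applied at such an $x_0$ and centered at $t=0$, gives $u=0$ on $\{|t|+\dist(x_0,x)<T+d\}$, and since every $x\in\bar\Omega$ satisfies $\dist(x_0,x)\le d+T(\Omega)<d+T$, the $t=0$ slice of this cone contains all of $\bar\Omega$, i.e., $f=0$. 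Steps (ii) and (iii) --- the merging of the two vanishing intervals once $d>T/2$, and the evenness of $u$ in $t$ --- are the ideas your outline lacks; without them the support information at $t=0$ and $t=T$ cannot be fed into Tataru's theorem at all.
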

\begin{proof}
Let $w$ solve
\begin{equation}   \label{2.3}
\left\{
\begin{array}{rcll}
(\partial_t^2 + P)w &=&0 &  \mbox{in $(0,T)\times \Omega$},\\
w|_{[0,T]\times\partial\Omega}&= &0,\\
w|_{t=T} &=& u|_{t=T}-\phi,\\ \quad w_t|_{t=T}& =&u_t|_{t=T},\\
\end{array}
\right.               
\end{equation}
where $u$ solves \r{1} with a given $f\in H_D$. 
Let $v$ be the solution of \r{2} with $h=\Lambda f$. Then $v+w$ solves the same initial boundary value problem in $[0,T]\times\Omega$ that $u$ does (with initial conditions at $t=T$), therefore $u=v+w$. Restrict this to $t=0$ to get
\[
f= A\Lambda f + w(0,\cdot).
\]
Therefore, 
\[
Kf = w(0,\cdot).
\]
In what follows, $(\cdot,\cdot)_{H_{D}(\Omega)}$ is the inner product in $H_{D}(\Omega)$, see \r{2.0H}, applied to functions that belong to $H^1(\Omega)$ but maybe not to $H_{D}(\Omega)$ (because they may not vanish on $\bo$). Set $u^T := u(T,\cdot)$. By \r{Pf} and the fact that $u^T=\phi$ on $\bo$, we get
\[
(u^T-\phi,\phi)_{H_{D}(\Omega)}=0.
\]
Then
\[
\|u^T-\phi\|^2_{H_{D}(\Omega)} = \|u^T\|^2_{H_{D}(\Omega)} - \|\phi\|^2_{H_{D}(\Omega)}\le \|u^T\|^2_{H_{D}(\Omega)}.
\]
Therefore, the energy  of the initial conditions in \r{2.3} satisfies the inequality
\be{2.4}
E_\Omega(w,T) = \|u^T-\phi\|^2_{H_{D}(\Omega)}  +\|u^T_t\|^2_{L^2(\Omega)}  \le E_\Omega(u,T).
\ee
Since the Dirichlet boundary condition is energy preserving, we get that 
\[
E_{\Omega}(w,0) =    E_{\Omega}(w,T)\le  E_{\Omega}(u,T)\le E_{\R^n}(u,T)= E_{\Omega}(u,0) = \|f\|^2_{H_{D}(\Omega)}. 
\]
In particular, 
\be{2.5}
\|Kf\|^2_{H_{D}(\Omega)} \le E_{\Omega}(w,0)\le \|f\|^2_{H_{D}(\Omega)}.
\ee

We show next that actually the inequality above is strict, i.e., 
\be{2.6}
\|Kf\|_{H_{D}(\Omega)} < \|f\|_{H_{D}(\Omega)}, \quad f\not=0.
\ee
Assume the opposite. Then for some $f\not=0$, all inequalities leading to \r{2.5} are equalities. In particular, $ E_{\Omega}(w,T)=  E_{\R^n}(u,T)$. Then 
\[
u(T,x) = 0, \quad \mbox{for $x\not\in\Omega$}.
\]
By the finite domain of dependence then
\be{2.7}
u(t,x) = 0 \quad \mbox{when $\dist(x,\Omega)>|T-t|$}.
\ee
One the other hand, we also have 
\be{2.8}
u(t,x) = 0 \quad \mbox{when $\dist(x,\Omega)>|t|$}.
\ee
Therefore,
\be{2.8a}
u(t,x) = 0 \quad \mbox{when $\dist(x,\bo)>T/2, \;  -T/2\le t\le 3T/2$}.
\ee
Since $u$ extends to an even function of $t$ that is still a solution of the wave equation, we get that \r{2.8a} actually holds for $|t|<3T/2$. 
Then one concludes by Tataru's theorem, see Theorem~\ref{thm_uq}, that $u=0$ on $[0,T]\times \Omega$, therefore, $f=0$.  We refer to \cite{FinchRakesh08} for a similar argument. Note that the time interval here is actually larger than what we need for the uniqueness argument, see also Theorem~\ref{thm_uniq} and Corollary~\ref{cor_1} below. 

We will show now that $K$ is compact. Since $T>T(\Omega)$, all singularities starting from $\bar \Omega$ leave $\bar \Omega$ at $t=T$. Therefore, $u(T,\cdot)$ and $u_t(T,\cdot)$, restricted to $\bar\Omega$, are $C^\infty$. Moreover, considered as linear operators of $f$, they are operators (FIOs, actually) with smooth Schwartz kernels. 
Then so is $\phi$, see \r{3}, by elliptic regularity. 
Therefore, the map $H_{D}(\Omega)\ni f\mapsto u(T,\cdot)-\phi\in H_D(\Omega)$ is compact because it is an operator with smooth kernel on $\bar\Omega$. Next, the map $H_{D}(\Omega)\ni f\mapsto u_t(T,\cdot)\in H_D(\Omega)$ is compact as well. Since the solution operator of \r{2.3} from $t=T$ to $t=0$ is unitary in $H_D(\Omega)\oplus L^2(\Omega)$, we get that the map $H_{D}(\Omega)\ni f\mapsto w(0,\cdot)\in H_D(\Omega)$ is compact, too, as a composition of a compact and a bounded one.

Now, one has 
\be{2.9}
\|Kf\|_{H_{D}(\Omega)} \le \sqrt{\lambda_1} \|f\|_{H_{D}(\Omega)}, \quad f\not=0,
\ee
where $\lambda_1$ is the largest eigenvalue of $K^*K$. Then $\lambda_1<1$ by \r{2.6}.
\end{proof}

\begin{remark}
Although we proved that $K$ is compact, we did not show that $K$ is smoothing of   $1$ degree. Actually, we showed that $K$ is a composition of a smoothing operator and a bounded one. To make $K$ smoothing, we need to modify the initial condition for $w_t(T,\cdot)$ in \r{2.3}, as we did it for $w(T,\cdot)$, so that it would satisfy the compatibility at $\{T\}\times \bo$ (no jump there, i.e, $w_t(T,\cdot)\in H_0^1(\Omega)$). That will put $(w(T,\cdot), w_t(T,\cdot))$ in the domain of the generator of the solution group, in other words,  $(w_t, Pw(T,\cdot))$ would be in the energy space. Then the same would be true for $Pw(0,\cdot)=-PKf$, hence $Kf\in H^2(\Omega)$. Then we get a Fredholm problem again but the norm of $K$ may not be less than $1$ (that still might be true in a suitable norm). In any case, $\Id-K$ will be invertible. 
One can also modify the initial data at $t=T$ in \r{2.3} to satisfy even higher order compatibility condition, and that will increase the smoothing properties of $K$. 
\end{remark}

\begin{remark}
The smoothness requirements on the coefficients of $P$ can be relaxed to require
smoothness of a finite degree. All we need, besides a well posed problem in the energy space, is a propagation of singularities result with a gain of smoothness on $t=T$ enough to guarantee compactness of $K$; and Tataru's uniqueness theorem in that case. We will not pursue this for the sake of simplicity of the exposition. 
\end{remark}

The proof of Theorem~\ref{thm2.1} provides an estimate of the error in the reconstruction if  we  use the first term in \r{2.2} only that is $Ah$. It is in the spirit of \cite{Hristova08} and relates the error to the local energy decay, as can be expected. 
\begin{corollary}\label{cor_est}
\[
\|f-A\Lambda f\|_{H_D(\Omega)} \le \left( \frac{E_\Omega(u,T)}{E_\Omega(u,0)}\right)^\frac12 \|f\|_{H_D(\Omega)} ,\quad \forall f\in H_{D(\Omega)},\; f\not=0,
\]
where $u$ is the solution of \r{1}.
\end{corollary}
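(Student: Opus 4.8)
The plan is to read the corollary directly off the energy estimates already established in the proof of Theorem~\ref{thm2.1}, since the reconstruction error $f-A\Lambda f$ is precisely the operator $K$ studied there. First I would recall the identity $f = A\Lambda f + w(0,\cdot)$, obtained by restricting $u=v+w$ to $t=0$, so that $f-A\Lambda f = Kf = w(0,\cdot)$. Combining this with \r{2.5} gives immediately
\[
\|f-A\Lambda f\|_{H_D(\Omega)}^2 = \|Kf\|_{H_D(\Omega)}^2 \le E_\Omega(w,0).
\]

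Next I would invoke the chain of (in)equalities displayed in that same proof, but stopping one link earlier than before. Because the Dirichlet problem \r{2.3} is energy preserving, $E_\Omega(w,0)=E_\Omega(w,T)$; and \r{2.4} yields $E_\Omega(w,T)\le E_\Omega(u,T)$, the decisive input being that the elliptic correction $\phi$ only decreases the Dirichlet energy of the data at $t=T$, via the orthogonality relation $(u^T-\phi,\phi)_{H_D(\Omega)}=0$. Stringing these together gives
\[
\|f-A\Lambda f\|_{H_D(\Omega)}^2 \le E_\Omega(u,T).
\]

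Finally I would rewrite the right-hand side in the claimed ratio form. Since the Cauchy data of \r{1} are $(f,0)$ with $f$ supported in $\bar\Omega$, the $L^2$ part of the energy vanishes at $t=0$ and $E_\Omega(u,0)=\|f\|_{H_D(\Omega)}^2$; multiplying and dividing the bound $E_\Omega(u,T)$ by this positive quantity turns it into $\bigl(E_\Omega(u,T)/E_\Omega(u,0)\bigr)\|f\|_{H_D(\Omega)}^2$, and taking square roots yields the stated estimate. There is essentially no genuine obstacle here, only bookkeeping: the one thing worth checking is that every step leading to \r{2.5} is a bona fide inequality valid for all $f\neq0$ (not merely asymptotically as $T\to\infty$), so the quantitative factor is legitimate even when $E_\Omega(u,T)$ is not small. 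The interpretation, as anticipated, is that the single-step error $\|f-Ah\|_{H_D(\Omega)}$ is governed by the fraction of energy remaining inside $\Omega$ at time $T$, i.e.\ by the local energy decay rate, exactly in the spirit of \cite{Hristova08}.
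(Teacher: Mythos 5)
Your proposal is correct and follows exactly the route the paper intends: the corollary is read off the proof of Theorem~\ref{thm2.1} by identifying $f-A\Lambda f$ with $Kf=w(0,\cdot)$, stopping the energy chain at $E_\Omega(w,0)=E_\Omega(w,T)\le E_\Omega(u,T)$ (via \r{2.4} and the orthogonality relation), and normalizing by $E_\Omega(u,0)=\|f\|_{H_D(\Omega)}^2$, which holds because the initial velocity vanishes. No gaps; the bookkeeping, including the restriction $f\neq 0$ so the ratio is well defined, is handled properly.
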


Note that the $f-A\Lambda f=Kf$, and the corollary actually provides an upper bound for $\|Kf\|$. The estimate above also can be used to estimate the rate of convergence of the Neumann series \r{2.2} when we have a good control over the uniform local energy decay from time $t=0$ to time $t=T$. The estimate holds even without the non-trapping condition and for any $T>0$ but $E_\Omega(u,T)/E_\Omega(u,0)$, that is always less or equal to $1$,  can be guaranteed to have a uniform upper bound less than $1$ for all $f$ only when $T>T(\Omega)$; then the operator norm of $K$ is less than $1$, as well. If $T(\Omega)/2<  T\le T(\Omega)$, we can only say that $\|Kf\|<\|f\|$ for any $f$, see Corollary~\ref{cor_1}, below but that does not necessarily imply that $\|K\|<1$. If $T<T(\Omega)/2$, then 
there is always $f$ so that that quantity equals $1$ by a trivial domain of dependence argument. 


\section{Incomplete data} \label{sec_incomplete}
The case of partial measurements has been discussed in the literature as well, see e.g.,\cite{KuchmentKun08,XuKA, XuWKA}.  One of the motivations is that in breast  imaging, for example, measurements are possible only on part of the boundary. For simplicity, we assume in this section that $P=-\Delta$  outside $\Omega$; in particular $c=1$ and $g$ is Euclidean outside $\Omega$:  
\be{i0}
c(x)=1, \quad  g_{ij}(x)=\delta_{ij}, \quad\mbox{for $x\not\in\Omega$}.
\ee
All geodesics below are related to the metric $c^{-2}g$.

Let $\Gamma\subset\bo$ be a relatively open subset of $\bo$. Set 
\be{i1}
\mathcal{G} := \left\{  (t,x); \; x\in \Gamma, \, 0<t<s(x)    \right\},
\ee
where $s$ is a fixed continuous function on $\Gamma$. This corresponds to measurements taken at each $x\in\Gamma$ for the time interval $0<t<s(x)$. The special case studied so far is  $s(x)\equiv T$, for some $T>0$; then $\mathcal{G}= [0,T]\times\Gamma$. 

We assume now that the observations are made on $\mathcal{G}$ only, i.e., we assume we are given
\be{3.1}
\Lambda f|_{\mathcal{G}},
\ee
where, with some abuse of notation, we denote by $\Lambda$ the operator in \r{1b}, with $T=\infty$ (that actually can be replaced by any upper bound of the function $s$). 
 Then we  want to know under what conditions one can recover $f$, and when  that recovery is stable. 

Uniqueness and reconstruction results in the constant coefficients case  based on spherical means were known for a while, see e.g., the review paper \cite{KuchmentKun08}. 
If $P=-c^2(x)\Delta$, and $\mathcal{G} = [0,T]\times \bo$, Finch and Rakesh \cite{FinchRakesh08} 
have proved that $\Lambda f$ recovers $f$ uniquely as long as $T>T(\Omega)$. A uniqueness result when $\Gamma$ is a part of $\bo$ in the constant coefficients case is given in \cite{finchPR}, and we follow the ideas of that proof below.  The Holmgren's  uniqueness theorem for constant coefficients  and its analogue for variable ones, see Theorem~\ref{thm_uq} below,  play a central role in the proofs that suggests possible instability without further assumptions, see also the remark following Theorem~\ref{thm_stab} below. Stability of the reconstruction when $P=-\Delta$ and $T=\infty$  follows from the known reconstruction formulas, see e.g., \cite{KuchmentKun08}. In the variable coefficients case, stability estimates as $T\to\infty$ based on local energy decay  have been established recently in \cite{Hristova08}. When $T$ is fixed, there is the general feeling that if one can recover ``stably'' all singularities, and if there is uniqueness, there must be stability (although this has been viewed from the point of view of integral geometry, see also Section~\ref{sec_ig}).   We prove this to be the case in Theorem~\ref{thm_stab}, and we use  analysis in \cite{SU-JFA09}, as well. 

We  present some heuristic arguments for our main assumption below. 
We will restrict $f$ below to a class of functions with support in some fixed compact $\mathcal{K}\subset \Omega$. Intuitively, to be able to recover all $f$  
supported in $\mathcal{K}$, we want for any $x\in \mathcal{K}$, at least one signal from $x$ to reach $\mathcal{G}$, i.e., we want to have a signal that reaches some $z\in\Gamma$ for $t<s(z)$. 
In other words, we should at least require that
\be{i2}
\forall x\in\mathcal{K}, \exists z\in\Gamma\; \mbox{so that} \; \dist(x,z)<s(z),
\ee
(one may want to have a non-strict equality above but we will not pursue this). In Theorem~\ref{thm_uq} below, we show that this necessary condition, up to replacing the $<$ sign by the $\le$ one,  is sufficient, as well. 

If we want that recovery to be stable, we need to be able to recover all singularities of $f$ ``in a stable way.''  
By the zero initial velocity condition, each singularity $(x,\xi)$ splits into two parts, see Proposition~\ref{pr_FIO} below: one that starts propagating in the direction $\xi$; and another one propagates in the direction $-\xi$. Moreover, neither one of those singularities  vanishes at $t=0$ (and therefore never vanishes), they actually start with equal amplitudes. For a stable recovery, we need to be able to detect at least one of them, in the spirit of  \cite{SU-JFA09}, i.e., at least one of them should reach $\mathcal{G}$.  This in particular allows us to reduce $T$ by half in the full boundary data case, i.e., when $\mathcal{G}=(0,T)\times\bo$, one can choose
\be{3.2}
T>T(\Omega)/2,
\ee
and still   hope that a stable recovery is possible. 
In the general case, define $\tau_\pm(x,\xi)$ by the condition
\[
\tau_\pm(x,\xi) = \max\left(\tau\ge0; \; \gamma_{x,\xi}(\pm\tau)\in \bar\Omega\right).
\]
Based on the arguments above, for stable recovery we should assume that $\mathcal{G}$ satisfies the following condition
\be{3.3}
\mbox{$\forall (x,\xi)\in S^*\mathcal{K}$,   $\left( \tau_\sigma(x,\xi), \gamma_{x,\xi}(\tau_\sigma(x,\xi)\right)\in\mathcal{G}$ for either $\sigma=+$ or $\sigma=-$ (or both).}
\ee
Compared to condition \r{i2}, this means that for each $x\in\mathcal{K}$ \textit{and each unit direction $\xi$}, at least one of the signals from $(x,\xi)$ and $(x,-\xi)$ reaches $\mathcal{G}$. This condition becomes necessary, if we replace $\mathcal{G}$ by its closure above, see Remark~\ref{remark_3}. 
In Theorem~\ref{thm_stab} below, we show that it is also sufficient. 

\subsection{Uniqueness.} We have the following uniqueness result, that in particular generalizes the result in \cite{finchPR} to the variable coefficients case.

\begin{remark}
Note that we do not need the geodesic flow to be non-trapping in this theorem since \r{i2} is a condition on a subset of the geodesics only. 
\end{remark}

\begin{theorem}  \label{thm_uniq}
Let $P=-\Delta$   outside $\Omega$, and let $\bo$ be strictly convex. 
Then under the assumption \r{i2}, if $\Lambda f=0$ on $\mathcal{G}$ 
for  $f\in H_D(\Omega)$ with $\supp f\subset\mathcal{K}$, then $f=0$. 
\end{theorem}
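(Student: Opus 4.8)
The plan is to deduce that $u(0,\cdot)$ vanishes on all of $\mathcal{K}$; since $\supp f\subset\mathcal{K}$ and $f=u(0,\cdot)$, this forces $f=0$. The engine is unique continuation from the lateral data, but because we are only handed the Dirichlet trace $\Lambda f$ on $\mathcal{G}$, the first task is to manufacture vanishing \emph{Cauchy} data on $\Gamma$. To use the time symmetry I would extend $u$ to an even function of $t$, which remains a solution of $(\partial_t^2+P)u=0$ on $\R\times\R^n$ because $\partial_t u|_{t=0}=0$ (the device already used in the proof of Theorem~\ref{thm2.1}). Since $\supp f\subset\mathcal{K}\subset\Omega$ we have $f|_{\bo}=0$, so $u(0,z)=0$ for $z\in\Gamma$; together with $\Lambda f=0$ on $\mathcal{G}$ and evenness this gives $u=0$ on $\Gamma\times(-s(z),s(z))$.

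Next — and this is the step I expect to be the main obstacle — I would use $P=-\Delta$ outside $\Omega$ and the strict convexity of $\bo$ to upgrade the vanishing of the trace into vanishing of $u$ in a one-sided \emph{exterior} neighborhood of $\Gamma$. In the exterior $u$ solves the free wave equation with zero Cauchy data at $t=0$ (again because $\supp f\subset\Omega$). Fix an interior point $z_0$ of $\Gamma$ and a time $|t_0|<s(z_0)$, and take $x_0$ just outside $\Omega$ near $z_0$. By strict convexity the portion of $\bo$ visible from $x_0$ is a small cap around $z_0$ that shrinks as $x_0\to z_0$ and is contained in $\Gamma$; moreover a backward null ray that reflects off this cap leaves the obstacle and, by convexity again, never meets $\bo$ a second time before reaching $t=0$. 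Hence the backward domain of dependence of $(t_0,x_0)$ meets $\{t=0\}$ only where the Cauchy data vanish and meets $\bo$ only inside $\Gamma\times(-s(z_0),s(z_0))$, where $u=0$; by finite speed of propagation $u(t_0,x_0)=0$. Thus $u$ vanishes on a one-sided exterior neighborhood of $\{z_0\}\times(-a,a)$ for every $a<s(z_0)$. Making this billiard/domain-of-dependence bookkeeping rigorous is the crux, and is the variable-coefficient analogue of the constant-coefficient argument of \cite{finchPR}.

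Finally I would feed this into Tataru's theorem (Theorem~\ref{thm_uq}). The vanishing of $u$ on the open exterior set, combined with $u=0$ on $\Gamma\times(-s,s)$ and the $C^1$ matching across $\bo$, yields vanishing Cauchy data $(u,\partial_\nu u)=0$ on $\Gamma\times(-a,a)$, so Theorem~\ref{thm_uq} propagates the vanishing into $\Omega$ and forces $u=0$ wherever there is $z_0\in\Gamma$ with $\dist(x,z_0)+|t|<a$, the distance being that of $c^{-2}g$. Evaluating at $t=0$ and letting $a\uparrow s(z_0)$ gives $u(0,x)=0$ whenever $\dist(x,z_0)<s(z_0)$. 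Taking the union over all $z_0\in\Gamma$, assumption \r{i2} guarantees that every $x\in\mathcal{K}$ is covered, so $f=u(0,\cdot)$ vanishes on $\mathcal{K}$ and hence $f=0$. This also explains why \r{i2} is the natural, nearly sharp hypothesis: it is exactly the requirement that these geodesic domains of influence exhaust $\mathcal{K}$.
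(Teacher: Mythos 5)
Your overall skeleton (even extension in $t$, exterior domain of dependence, then Tataru's Theorem~\ref{thm_uq} to push the vanishing into $\Omega$) is the same as the paper's, but the step you yourself flag as the crux --- that $u$ vanishes in a one-sided exterior neighborhood of $\Gamma$ for \emph{all} times $|t_0|<s(z_0)$ --- is false as argued, and this is a genuine gap, not bookkeeping. The domain of dependence for the exterior problem (Proposition~\ref{prF}, from \cite{finchPR}) is governed by the exterior distance $d(\cdot,\cdot)$: to conclude $u(t_0,x_0)=0$ you need the boundary trace to vanish on the whole set $\left\{ (t,y);\ y\in\bo,\ 0\le t\le t_0,\ d(y,x_0)\le t_0-t \right\}$. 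When $t_0$ is comparable to $s(z_0)$ (which may be as large as $T$), this set contains boundary points at exterior distance up to $t_0$ from $x_0$, i.e.\ points far outside $\Gamma$, where $\Lambda f$ is not known to vanish. Your ray-reflection picture misses this: the exterior domain of dependence is not swept out by transversally reflected rays alone; it includes gliding rays creeping around the convex obstacle, which is exactly why Proposition~\ref{prF} is phrased in terms of $d$. Consequently, in one shot you may only take $t_0$ up to roughly the radius $\rho$ of a small cap $\Gamma=E_\rho(p)\cap\bo$ (so that $d(y,x_0)\le t_0-t$ forces $y\in\Gamma$), and Tataru's theorem then yields $f=0$ only on the small ball $B_\rho(p)$ --- not on $\{x:\dist(x,z_0)<s(z_0)\}$.

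What is missing is the iteration that is the heart of the paper's proof. Once $f=0$ on $B_r(p)$, the \emph{interior} domain of dependence gives $u=u_t=0$ on $[-\delta,\delta]\times E_\rho(p)$ with $\delta=\dist(E_\rho(p),\bar\Omega\setminus B_r(p))$; one then re-runs the exterior argument with initial time shifted to $t=\delta$, so the same short cone of height $\rho$ now covers the window $[\delta,\delta+\rho]$, and the even extension plus Theorem~\ref{thm_uq} upgrade the vanishing of $f$ to the larger ball $B_{\min\{\rho+\delta,T\}}(p)$ (Lemma~\ref{lemma_claim}). For this loop to terminate one needs $\rho+\delta\ge r+\sigma$ with a fixed $\sigma>0$ per step; that is where strict convexity is really used, via Lemma~\ref{dist} (the minimizing geodesic from $\bar\Omega\setminus B_r(p)$ to $\bar E_\rho(p)$ ends at a point of $\Gamma$) and the estimate \r{i4A}, which yield $\sigma=\rho-\alpha>0$ with $\alpha=\sup_{q\in\Gamma}\dist(p,q)<\rho$. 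In your write-up convexity appears only in the (incorrect) visibility argument, so this mechanism is entirely absent. A minor further point: Theorem~\ref{thm_uq} as stated propagates vanishing from a neighborhood of a segment $[-a,a]\times\{x_0\}$ on which $u$ already vanishes, not from Cauchy data $(u,\partial_\nu u)$ on the lateral surface $\Gamma\times(-a,a)$; the paper applies it at exterior points near $p$, where $u$ is known to vanish on an open set, which is the correct way to finish.
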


\begin{proof}
We follow the proof in \cite{finchPR}, where $g$ is Euclidean everywhere, and $T=\infty$ (actually, it is easy to see there that $T$ can be any number larger than $T(\Omega)$). We preserve the notation of \cite{finchPR} as much as possible. 

Recall that  $\dist(x,y)$ is the distance in the metric $c^{-2}g$. 
Let $d(x,y)$ be the (Euclidean) distance in $\R^n\setminus \Omega$ defined as the infimum of the Euclidean length of all smooth curves in $\R^n\setminus \Omega$ joining $x$ and $y$. The function $d$ is Lipschitz continuous, see \cite{finchPR}. Let $E_r(x)$ be the ball with center $x$ and radius $r>0$ in that metric. Then in \cite[Proposition~2]{finchPR}, Finch et al.\ proved the following  domain of dependence results for solutions vanishing on a part of $\bo$.

\begin{proposition}[\cite{finchPR}]  \label{prF}
Let $\Omega$ be an open bounded connected subset of $\R^n$ with a smooth boundary. Suppose $u$ is a smooth solution of the exterior problem
\begin{align*}
u_{tt}-\Delta u&=0, \quad t\in\R; \; x\in \R^n\setminus\Omega ,&\\
u&=h \quad  \mbox{on $\R\times \bo$}.
\end{align*}
Choose $p\not\in \Omega$, and $t_0<t_1$. If $u(t_0,\cdot)=u_t(t_0,\cdot)=0$  on $E_{t_1-t_0}(p)$, and $h$ is zero on 
\[
\left\{  (t,x); \; x\in\bo, \; t_0\le t\le t_1, \; d(x,p)\le t_1-t      \right\},
\]
then $u(t,p)=u_t(t,p)=0$ for all $t\in [t_0,t_1]$. 
\end{proposition}

Let $\Lambda f=0$ on $\mathcal{G}$, with $f$ as in the theorem, and let $u$ be the corresponding solution of \r{1}. Fix a point $x_0\in\mathcal{K}$. We will show that $f=0$ near $x_0$. By \r{i2}, there is  $p\in\bo$ so that $\dist(x_0,p)<s(p)$; then $(s(p),p)\in\mathcal{G}$. Let $0<\rho\ll1$ be such that $[0,s(p)-\rho]\times (E_\rho(p)\cap \bo) \subset \mathcal{G}$, and $\dist(x_0,q)<s(q)-\rho$, $\forall q\in E_\rho(p)\cap \bo$. We can therefore assume that 
\be{iA1}
\mathcal{G} = [0,T]\times \Gamma,  \mbox{where $\Gamma = E_\rho(p)\cap \bo$},
\ee
and 
\be{iA2}
\dist(x_0,q)<T \quad \forall q\in \Gamma.
\ee

The first step of the proof if to show that 
\be{i3}
f=0 \quad \mbox{in $B_\rho(p)$},
\ee
where $B_\rho(p)$ is the ball in the metric $g$ with center $p$ and radius $\rho$. The proof of \r{i3} is the same as in \cite{finchPR} with taking extra care about the range of the $t$ variable. Indeed, notice first that $u$ solves the wave equation in $\R^n\setminus\Omega$ with zero Cauchy data there, and boundary data $h=u|_{\R_+\times\bo}$ vanishing on $\mathcal{G}$, see \r{iA1}. Fix a small neighborhood $U$ of $p$ outside $\Omega$.  By \r{iA2} and the finite domain of dependence result in Proposition~\ref{prF}, we get  $u=0$ on $(-\rho+\eps,\rho-\eps)\times U$, where $0<\eps\to0$, when the size of $U$ tends to $0$. 

Next, $u$ solves the wave equation in the whole space, and can be extended (as a solution) as an even function of $t$. Therefore, by the unique continuation principle, see Theorem~\ref{thm_uq}, we get \r{i3}.

The next step is to iterate this argument and to prove  that $f=0$ near $x_0$. This would follow from the following property that we prove next: For some $\sigma>0$ independent of $\rho$, we have
\be{i4}
f=0 \quad \mbox{in $B_r(p)$}, \; r\ge\rho \quad  \Longrightarrow\quad f=0 \quad \mbox{in $B_{\min\{\rho+\sigma,T\}}(p)$}. 
\ee
The reason we did not just replace the minimum above with $\rho+\sigma$  is that we apply \r{i4} consecutively several times; at each step we gain $\sigma$, and we would like to make the radius equal to $T$. The last step needed for that might be smaller than $\sigma$ though, and \r{iA1}, \r{iA2} pose a restriction on how far we can go. 

Relation \r{i4} follows from the following. 

\begin{lemma} \label{lemma_claim}
Assume that $\supp f\subset K = \bar\Omega\setminus B_r(p)$ with some $r\ge \rho$. Let $\delta=\dist(E_\rho(p), K)$. 
Then $f=0$ in $B_{\min\{\rho+\delta,T\}}(p)$.
\end{lemma}

We prove Lemma~\ref{lemma_claim} below. Let $\alpha$ be the supremum of the distance $\dist(p,q)$, $q\in\Gamma$. Since $\bo$ is strictly convex, $\alpha<\rho$. Indeed, $\alpha$ is actually the maximum of those distances, if we replace $\Gamma$ by the compact $\bar\Gamma$. Then $\alpha=\dist(p,q_0)$ for some $q_0\in\bar\Gamma$. Because of the strict convexity  the latter is the length of the shortest geodesic on $\bo$ connecting $p$ and $q_0$. If we assume that $\alpha=\rho$, then that geodesic will be a minimizing curve for $c^{-2}g$ as well, therefore it will be a geodesic for that metric. That is impossible because for $\rho\ll1$, there is unique minimizing geodesic connecting $p$ and $q_0$, and that geodesic  cannot be on $\bo$. 

The following lemma  generalizes \cite[Propositon~5]{finchPR} to the current setting. We refer to  Fig.~1 that is similar to  Fig.~2.5 there for better understanding of the lemma and its proof.


\begin{figure}[t]   \label{fig:thermo}
  \centering
  \includegraphics[bb=0 0 626 309,width=4.82in,height=2.38in,keepaspectratio]{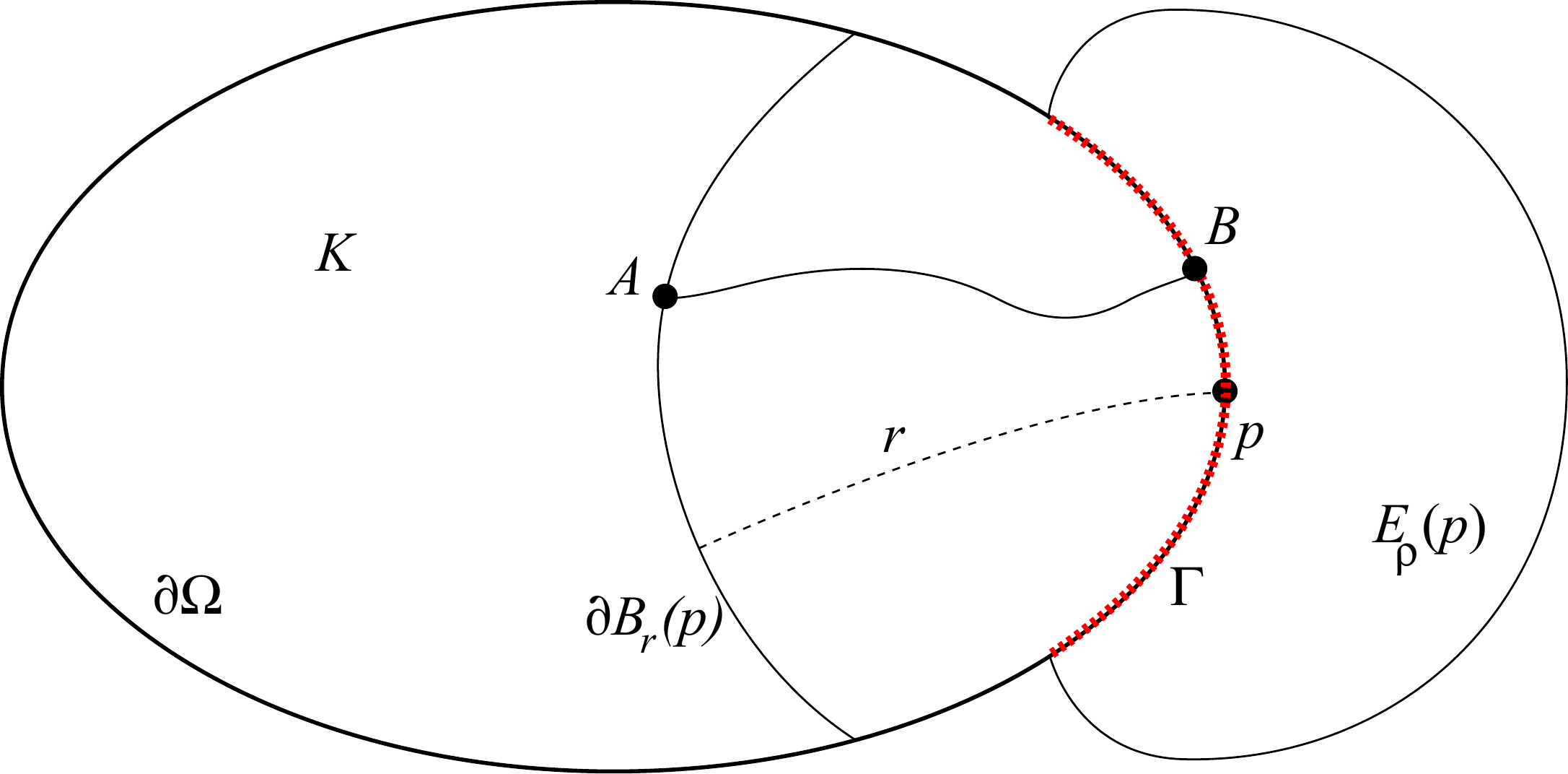}
  \caption{Illustrates Lemma~\ref{lemma_claim}. One can also show that $A\in \bo\cap \partial B_r(p)$. }
\end{figure}

\begin{lemma}\label{dist}
$\dist(K,\bar E_\rho(p))$ is the length of some geodesic segment joining a point 
$A\in K$ and a point $B\in\Gamma$. 
\end{lemma}

The proof is provided below, and we continue with the proof of Theorem~\ref{thm_uniq}. By Lemma~\ref{dist}, $\delta$ is the length of the geodesic segment connecting $A$ and $B$, as in the lemma. Then
\be{i4A}
\begin{split}
\rho+\delta &=\rho+\dist(A,B)= \dist(A,B)+\dist(B,p) +\left(\rho-\dist(B,p)\right)\\
   &\ge \dist(p,A) + \left(\rho-\dist(B,p)\right)\ge r+(\rho-\alpha). 
\end{split}
\ee
Note that $\sigma:= \rho-\alpha>0$ is independent of $r$. This proves the property \r{i4}, and therefore, the theorem.
\end{proof}

It remains to prove the two lemmas above.

\begin{proof}[Proof of Lemma~\ref{dist}]
We will provide a proof that is different and  shorter than that in \cite{finchPR}. Since $\dist(K,\bar E_\rho(p))$ is the distance between two compact sets, there is $A\in K$ and $B\in \bar E_\rho(p)$ so that $\dist(K,\bar E_\rho(p))=\dist(A,B)$. By the Hopf-Rynow theorem, there is a geodesic $\gamma$ connecting $A$, $B$ so that $\dist(A,B)$ is the length of $\gamma$. Clearly, $B$ belongs to $\partial E_\rho(p)$ that consists of two parts: the first one that we denote by $\partial E_\rho^{\rm{ext}}(p)$, that is outside $\bar\Omega$; and the second one is $\Gamma$, see \r{iA1} We will show first that $B$ must belong to the second one. Assume the opposite. Then $\gamma$ intersects $\bo$ once (because of the strict convexity) at some point $C\not\in\Gamma$ because if $C\in\Gamma$, then we would have $C=B$. Then the segment $CB$ of $\gamma$ is a straight line segment outside $E_\rho(p)$, see Figure~2.


\begin{figure}[h]   \label{fig2}
  \centering
  \includegraphics[bb=0 0 626 315,width=4.83in,height=2.43in,keepaspectratio]{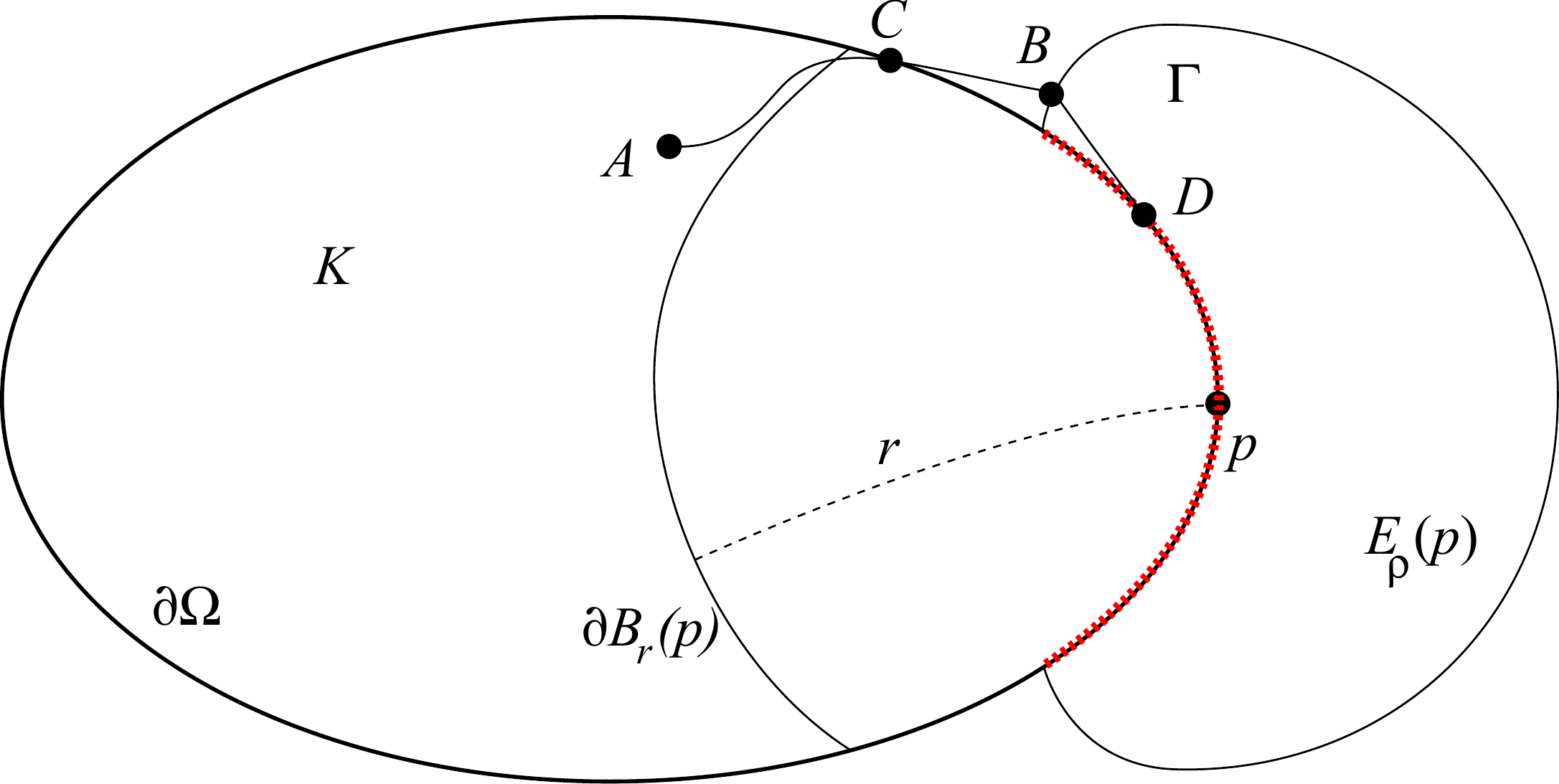}
  \caption{ }
\end{figure}

Let $c$ be the minimizing curve   for the metric $d$, lying outside $\Omega$,  that connects $B$ and $p$. It is easy to see (see \cite{finchPR}) that  $c$ exists and consists of a straight line segment $c_1=BD$ between $C$ and some $D\in \partial E_\rho(p)$, and a geodesic $c_2$ on $\bo$, possibly reduced to a point, so that $c_1$ and $c_2$ are tangent to each other and to $\bo$ at their common point  that we denote by $D$. Note that $\partial E_\rho^{\rm{ext}}(p)$ is an open surface, therefore $B\not=D$. 
Then the curve $CB\cup BD$ locally minimizes the lengths of all curves connecting $C$ and $D$ with the property that they consist of a curve outside $E_\rho(p)\cup \Omega$ connecting $C$ and some $B'\in \partial E_\rho^{\rm{ext}}(p)$ close to $B$; and then another curve,  outside $\Omega$ but inside  $E_\rho(p)$, connecting $B'$ to $D$. 
Then $CB\cup BD$ must be a straight line segment; otherwise we can make it shorter by an arbitrary small perturbation, and that would contradict the minimizing property above. That segment is tangent to $\bo$. 
 By the strict convexity of $\Omega$, it cannot have two common points $C$ and $D$ with $\bo$.  This contradiction shows that  $B\in \Gamma$, and this proves the second statement of the lemma. 
\end{proof}

\begin{proof}[Proof of Lemma~\ref{lemma_claim}]
Roughly speaking, the idea of the proof is that we can apply the arguments at the beginning of the proof of the theorem by shifting the initial moment form $t=0$ to $t=\delta$. 

First, by the definition of $\delta$ and the standard domain of dependence argument, 
\be{i5}
u=u_t=0 \quad \mbox{on $[-\delta,\delta]\times E_\rho(p)$}.
\ee
Let $U$ be a small enough neighborhood of $p$ in $E_\rho(p)$. If $\delta+\rho\le T$, by the domain of dependence argument for the exterior problem \cite[Proposition~2]{finchPR}, in view of \r{iA1}, \r{iA2}, $u=0$ on $[\delta,\delta+\rho-o(1)]\times U$, where by $o(1)$ we  denote terms tending to $0$ when the size of $U$ tends to $0$. If $\delta+\rho>T$, then we can prove that only in the time interval $[\delta,T-o(1)]$. Therefore, in both cases,  the time interval is $[\delta,\min\{\delta+\rho,T\}-o(1)]$.  Since $u$ extends as an even solution in the $t$ variable, we get that $u=0$ for $|t|\le \min\{\delta+\rho,T\}-o(1)$, $x\in U$. Then from the unique continuation result in Theorem~\ref{thm_uq}, $u|_{t=0}$, and therefore $f$ vanishes in $B_{\min\{\delta+\rho,T\}-o(1)}(p)$. Letting $U$ tend to $p$, we get that $f=0$  in $B_{\min\{\delta+\rho,T\}}(p)$. 
\end{proof}

It is probably worth mentioning that we actually proved the following result about partial recovery given insufficient information.

\begin{proposition}
Let $P=-\Delta$ outside $\Omega$, and let $\bo$ be strictly convex. Assume that $\Lambda f=0$ on $\mathcal{G}$ for some $f\in H_D(\Omega)$ with $\supp f\subset\Omega$ with  $\mathcal{G}$ as in \r{i0} that may not satisfy \r{i2}. Then $f=0$ in $W$, where
\[
W := \left\{ x\in   \Omega; \; \mbox{\rm $\exists  z\in\Gamma$ so that $\dist(x,z)<s(z)     $} \right\}.
\]
Moreover, no information about $f$ on $\Omega\setminus \bar W$ is contained in $\Lambda f|_{\mathcal{G}}$. 
\end{proposition}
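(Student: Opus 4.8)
The plan is to observe that both assertions are essentially corollaries of the proof of Theorem~\ref{thm_uniq} together with finite speed of propagation, so very little new work is needed. The first assertion is the local content of that proof applied at each point of $W$, and the second is a domain of dependence statement.

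For the claim that $f=0$ on $W$, I would emphasize that the argument establishing Theorem~\ref{thm_uniq} is entirely local in the point whose vanishing we deduce: there we fixed an arbitrary $x_0\in\mathcal{K}$, used \r{i2} only to produce a single $p\in\Gamma$ with $\dist(x_0,p)<s(p)$, and then obtained $f=0$ near $x_0$ via the first-step vanishing \r{i3} followed by the iteration \r{i4}. Now take any $x_0\in W$. By the very definition of $W$ there is $z\in\Gamma$ with $\dist(x_0,z)<s(z)$, which is exactly the input the local argument needs. Running that argument verbatim with $p=z$ gives $f=0$ in a neighborhood of $x_0$; since $x_0\in W$ is arbitrary, $f=0$ on $W$. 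The hypotheses that $\bo$ is strictly convex and $P=-\Delta$ outside $\Omega$ enter exactly as in the theorem, through Proposition~\ref{prF} and Lemmas~\ref{lemma_claim} and \ref{dist}.

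For the second assertion I would argue by finite speed of propagation. Let $f$ be supported in $\Omega\setminus\bar W$ and let $u$ solve \r{1}; by linearity it suffices to show $\Lambda f=0$ on $\mathcal{G}$, for then the data on $\mathcal{G}$ is unaffected by any modification of $f$ on $\Omega\setminus\bar W$. Fix $(t,z)\in\mathcal{G}$, so $z\in\Gamma$ and $0<t<s(z)$. The domain of dependence of the point $(t,z)$ for $\partial_t^2+P$ is the closed metric ball $\{x:\dist(x,z)\le t\}$ — this is the same finite speed of propagation already used in \r{2.7} and \r{2.8}, now localized at the single point $z$, since the null bicharacteristics of $\partial_t^2+P$ travel at unit speed in $c^{-2}g$ and the lower order terms $a,q$ do not affect the characteristics. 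On the other hand, any $x\in\supp f$ lies outside $\bar W$, hence outside $W$, so $\dist(x,z)\ge s(z)>t$; thus $\supp f$ is disjoint from the domain of dependence of $(t,z)$. Because the Cauchy data are $(f,0)$, this forces $u(t,z)=0$, and as $(t,z)\in\mathcal{G}$ was arbitrary we get $\Lambda f|_{\mathcal{G}}=0$.

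Both parts are genuinely short once the structure is recognized; the points requiring care are bookkeeping ones rather than analytic difficulties. In the first part one must check that the effective time $T$ and the patch $\Gamma$ manufactured in the theorem's proof may be chosen afresh for each individual $x_0\in W$ using its own $z$, which is immediate because those choices depended only on $x_0$ and $p$. In the second part the delicate step is aligning the inequalities: one needs the strict bound $t<s(z)$ built into the definition of $\mathcal{G}$ together with $\dist(x,z)\ge s(z)$ coming from $x\notin W$ in order to conclude the strict separation $\dist(x,z)>t$, so that $x$ lies strictly outside the \emph{closed} domain of dependence. I expect this inequality bookkeeping to be the main thing to get right.
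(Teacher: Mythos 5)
Your proposal is correct and takes essentially the same route as the paper, which gives no separate proof but remarks that the proposition ``was actually proved'' in the course of proving Theorem~\ref{thm_uniq}: your first part is precisely the observation that that proof is local and uses \r{i2} only through the existence of a single $z\in\Gamma$ with $\dist(x_0,z)<s(z)$, and your second part supplies the standard finite-speed-of-propagation argument that the paper leaves implicit. Your inequality bookkeeping is also right: $x\notin W$ gives $\dist(x,z)\ge s(z)>t$ for every $(t,z)\in\mathcal{G}$, so $\supp f$ misses the closed domain of dependence of each observation point and $\Lambda f$ vanishes on $\mathcal{G}$.
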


\subsection{Stability.} In this section, we use tools from microlocal analysis. We refer, for example,  to \cite{Treves} for an introduction to the theory of pseudo-differential operators (\PDO s) and to \cite{Treves2, Duistermaat} for the theory of Fourier Integral Operators (FIOs).

We now consider the situation where $\Lambda f$ is given on a set $\mathcal{G}$ satisfying \r{3.3}. Since $\mathcal{K}$ is compact and $\mathcal{G}$ is closed, one can always choose $\mathcal{G'}\Subset\mathcal{G}$ that still satisfies \r{3.3}. Fix $\chi\in C_0^\infty([0,T]\times \bo)$ so that $\supp\chi\subset\mathcal{G}$ and $\chi=1$ on $\mathcal{G}'$. The measurements are then modeled by $\chi \Lambda f$, which depends on  $\Lambda f$ on $\mathcal{G}$ only.

We start with a description of the operator $\Lambda$ that is of independent interest as well. In the next proposition, we formally choose $T=\infty$. 

\begin{proposition}\label{pr_FIO}
$
\Lambda = \Lambda_+ +\Lambda_-, 
$
where 
$\Lambda_{\pm} : C_0^\infty(\Omega) \to C^\infty((0,\infty)\times \bo)$ are elliptic Fourier Integral Operators of zeroth order with canonical relations given by the graphs of the maps 
\be{C}
(y,\xi)\mapsto \left(\tau_\pm(y,\xi), \gamma_{y,\pm\xi}(\tau_\pm(y,\xi)), |\xi|, \dot \gamma'_{y,\pm\xi}(\tau_\pm(y,\xi))\right),
\ee
where $|\xi|$ is the norm in the metric $c^{-2}g$, and the prime in $\dot\gamma'$ stands for the tangential projection of $\dot\gamma$ on $T\bo$.
\end{proposition}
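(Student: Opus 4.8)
The plan is to exhibit $\Lambda$ as the boundary trace of the wave group and then read off the canonical relation from the standard FIO calculus. Since the Cauchy data in \r{1} are $(f,0)$ and $P$ is positive and self-adjoint for $c^{-2}\d\Vol$, the solution is $u(t,\cdot)=\cos(t\sqrt P)\,f$, where $\sqrt P$ is the positive elliptic first-order \PDO\ with principal symbol $|\xi|$, the length of $\xi$ in $c^{-2}g$. I would split
\[
\cos(t\sqrt P)=\tfrac12\,e^{\i t\sqrt P}+\tfrac12\,e^{-\i t\sqrt P},
\]
and set $\Lambda_\pm f:=\tfrac12\,\bigl(e^{\mp\i t\sqrt P}f\bigr)\big|_{(0,\infty)\times\bo}$, so that $\Lambda=\Lambda_++\Lambda_-$ holds by construction; it remains to identify each summand as an FIO with the stated canonical relation. (The labeling of the two half-wave propagators is fixed so as to match the sign of $\pm\xi$ in \r{C}.)

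First I would recall (see, e.g., \cite{Duistermaat}) that the half-wave propagators $e^{\mp\i t\sqrt P}$ solve $(\partial_t\pm\i\sqrt P)(\cdot)=0$ and, as operators $C_0^\infty(\Omega)\to C^\infty(\R_t\times\Omega)$, are elliptic FIOs whose canonical relation is the flowout of the bicharacteristics of $\partial_t^2+P$. These bicharacteristics conserve $|\xi|$ and project to the geodesics of $c^{-2}g$; the two propagators push a conormal singularity of $f$ at $(y,\xi)$ along $\gamma_{y,+\xi}$ and $\gamma_{y,-\xi}$ respectively. Because the initial velocity in \r{1} is zero, that singularity is emitted with \emph{equal amplitude} $\tfrac12$ along both bicharacteristics, which is exactly what gives the ellipticity of $\Lambda_\pm$ and explains the splitting into two operators.

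Then I would compose with the boundary restriction. Restriction to the hypersurface $(0,\infty)\times\bo$ is the pullback under the inclusion, itself an FIO whose canonical relation keeps the time variable and the time frequency, keeps the base point, and replaces the spatial covector by its tangential projection onto $T\bo$. Composing this with the flowout of $e^{\mp\i t\sqrt P}$ selects, for input $(y,\xi)$, the instant $\tau_\pm(y,\xi)$ at which $\gamma_{y,\pm\xi}$ meets $\bo$, the contact point $\gamma_{y,\pm\xi}(\tau_\pm(y,\xi))$, the conserved frequency $|\xi|$, and the tangential part $\dot\gamma'_{y,\pm\xi}(\tau_\pm)$ of the covector there — precisely the map \r{C}. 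Since this map is a local diffeomorphism of $T^*\Omega\setminus 0$ onto (an open subset of) $T^*\bigl((0,\infty)\times\bo\bigr)\setminus0$ — the normal component of the boundary covector being recovered from $|\xi|$ and its tangential part via $\tau^2=|\xi|^2$ together with the outgoing sign — its graph is a canonical graph, so each $\Lambda_\pm$ is an FIO of graph type. A symbol count gives order $-\tfrac14$ for the interior solution operator $f\mapsto u$ and $+\tfrac14$ for the trace, so the composite has order $0$, as claimed.

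The main obstacle is the transversality needed to make the trace a clean (hence FIO) composition: the calculus applies where the bicharacteristics hit $\bo$ transversally, and tangential (glancing) rays must be excluded. Under the standing assumption of Section~\ref{sec_incomplete} that $P=-\Delta$ and $g$ is Euclidean outside $\Omega$ with $\bo$ strictly convex, geodesics leave $\bar\Omega$ along straight lines and cross $\bo$ exactly once and transversally, except for a closed conic glancing set of measure zero; on its complement $\tau_\pm$ is smooth, the composition above is transversal, and \r{C} describes the canonical relation globally. Treating the glancing set carefully — or, as the rest of the section does, restricting to $f$ with $\supp f\subset\mathcal{K}\Subset\Omega$ and to the directions relevant to \r{3.3} — is where the care is required; away from glancing the remaining work is the routine FIO bookkeeping sketched above.
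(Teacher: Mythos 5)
Your argument is correct, but it reaches the conclusion by a genuinely different route than the paper. You realize the splitting abstractly through the functional calculus, $u=\cos(t\sqrt P)f=\tfrac12 e^{-\i t\sqrt P}f+\tfrac12 e^{\i t\sqrt P}f$, invoke the global Duistermaat--H\"ormander theory to say that each half-wave group is an elliptic FIO whose canonical relation is the bicharacteristic flowout, and then compose with the trace on $(0,\infty)\times\bo$, checking that the composition is transversal and that the composed relation is the graph of \r{C}; your order bookkeeping ($-\tfrac14$ for the propagator kernel, $+\tfrac14$ for the trace) is the standard one and does give order $0$. The paper instead builds an explicit geometric-optics parametrix: phases $\phi_\pm$ solving the eikonal equations \r{o2}, classical amplitudes solving the transport equations \r{tr} with $a_+=a_-=1/2$ at $t=0$ forced by the zero initial velocity, restriction of the resulting oscillatory integral to $\R\times\bo$, a hands-on verification that the restricted phase is non-degenerate, and a separate patching argument (composition with the global FIO up to an intermediate time) to deal with conjugate points. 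Your route buys exactness and globality: $\Lambda=\Lambda_++\Lambda_-$ holds on the nose rather than modulo smoothing, and caustics cause no extra work because the Lagrangian-distribution machinery is insensitive to them. The paper's route buys explicit phase and amplitude data: the values $a_\pm=1/2$ at $t=0$ and the transport equation \r{tr} are precisely what is recycled in the proof of Theorem~\ref{thm_stab} to compute the principal symbol of $A\chi\Lambda$, which is why the paper does the construction by hand --- it even states that the proposition itself ``follows directly from \cite{Duistermaat}'', which is essentially your argument. Had you to continue to Theorem~\ref{thm_stab} with your approach, you would need a supplementary symbol computation (an Egorov-type argument for the composed relations) to produce that principal symbol.

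Two small corrections. Strict convexity of $\bo$ is not a standing assumption of Section~\ref{sec_incomplete}; it is hypothesized only in Theorem~\ref{thm_uniq} and Corollary~\ref{cor_1}, so you should state it as an added hypothesis. Under that hypothesis your glancing set is in fact empty, not merely of measure zero: a geodesic tangent to a strictly convex $\bo$ lies locally outside $\bar\Omega$, so a geodesic issued from an interior point can only exit transversally, and a transversally exiting straight ray never re-enters a convex domain. Without convexity, both your proof and the paper's are implicitly microlocal, valid away from the glancing directions where $\tau_\pm$ fails to be smooth; flagging this issue explicitly, as you do, is a point in your favor rather than a gap.
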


\begin{proof}
This statement is well known and follows directly from \cite{Duistermaat}, for example. We will give more details that are needed just for the proof of this proposition in order to be able to compute the principal symbol in Theorem~\ref{thm_stab}. 

We start with a standard geometric optics construction. Fix $x_0\in \Omega$. In a neighborhood of $(0,x_0)$, the solution to \r{2} is given by
\be{o1}
u(t,x) =  (2\pi)^{-n} \sum_{\sigma=\pm}\int e^{\i\phi_\sigma(t,x,\xi)} a_\sigma(x,\xi,t) \hat f(\xi)\, \d \xi,
\ee
modulo smooth terms, where the phase functions $\phi_\pm$ are positively homogeneous of order $1$ in $\xi$ and solve the eikonal equations
\be{o2}
\mp\partial_t\phi_\pm = |\d_x\phi_\pm|,\quad \phi_\pm|_{t=0}=x\cdot\xi,
\ee
while $a_\pm$ are classical amplitudes of order $0$ 
solving  the corresponding transport equations, see \cite[p.~128]{Duistermaat} or \cite[eqn.~(VI.1.50)]{Treves2}. 
In particular, $a_\pm$ satisfy
\[
a_+ +a_-=1\quad \mbox{for $t=0$}.
\]
Since $\partial_t\phi_\pm=\mp\xi$ for $t=0$, and $u_t=0$ for $t=0$, we also see that
\[
a_+=a_- \quad \mbox{for $t=0$}.
\]
Therefore, $a_+=a_-=1/2$ at $t=0$. Note that if $P=\Delta$, then $\phi_\pm = x\cdot\xi \mp t|\xi| $, and $a_+\equiv a_-=1/2$. The principal term $a_\pm^{(0)}$ of $a_\pm \sim\sum_{j\ge0} a_\pm^{(-j)}$ satisfies the homogeneous transport equation
\be{tr}
\left( \partial_t - c^{2}g^{ij}(\partial_{x^j}\phi_\pm) \partial_{x^j}+C_\pm \right)a_\pm=0,\quad a_\pm|_{t=0}=1/2,
\ee
where $C_j$ depend on the coefficients of $P$ and on $\phi_\pm$, see \cite[eqn.~(VI.1.49)]{Treves2}.

By the stationary phase method, singularities starting from $(x,\xi)\in \WF(f)$ propagate along geodesics in the phase space  issued from $(x,\xi)$, for $\sigma=+$. i.e., they stay on the curve $(\gamma_{x,\xi}(t),\dot\gamma_{x,\xi}(t) )$; and from $(x,-\xi)$, for $\sigma=-$, i.e., they stay on the curve $(\gamma_{x,-\xi}(t), \dot\gamma_{x,-\xi}(t))$. This is consistent with the general propagation of singularities theory for the wave equation because the principal symbol of the wave operator $\tau^2-c^{2}|\xi|_g$ has two roots $\tau = \pm c|\xi|_g$.

The construction is valid as long as the eikonal equations are solvable, i.e., along geodesics issued from $(x,\pm\xi)$ that do not have conjugate points. Assume that $\WF(f)$ is supported in a small neighborhood of $(x_0,\xi_0)$ with some $\xi_0\not=0$. 
 Assume first that the geodesic from $(x_0,\xi_0)$ with endpoint on $\bo$ has no conjugate points.  We will study the $\sigma=+$ term in \r{o1} first. Let $\phi_{\rm b}$, $a_{\rm b}$ be the restrictions of $\phi_+$, $a_+$, respectively, on $\R\times\bo$. Then, modulo smooth terms, 
\be{FIO1}
\Lambda_+ f:= u_+(t,x)|_{\R\times\bo} = (2\pi)^{-n}  \int e^{\i\phi_{\rm b}(t,x,\xi)} a_{\rm b}(x,\xi,t) \hat f(\xi)\, \d \xi,
\ee
where $u_+$ is the $\sigma=+$ term in \r{o1}. 
Set $t_0 = \tau_+(x_0,\xi_0)$, $y_0=\gamma_{x_0,\xi_0}(t_0)$, $\eta_0= \dot \gamma_{x_0,\xi_0}(t_0)$; in other words, $(y_0,\eta_0)$ is the exit point and direction of the geodesic issued from $(x_0,\xi_0)$ when it reaches $\bo$. Let $x=(x',x^n)$ be boundary normal coordinates near $y_0$. 
Writing $\hat f$ in \r{FIO1} as an integral, we see that \r{FIO1} is an oscillating integral with phase function $\Phi= \phi_+(t,x',0,\xi) -y\cdot\xi$. Then (see \cite{Treves2}, for example), the set $\Sigma:= \{\Phi_\xi=0\}$ is given by the equation
\[
y=\partial_\xi \phi_+(t,x',0,\xi) 
\]
It is well known, see e.g., Example~2.1 in \cite[VI.2]{Treves2}, that this equation implies that $(x',0)$ is the endpoint of the geodesic issued from $(y,\xi)$ until it reaches the boundary, and $t=\tau_+(y,\xi)$, i.e., $t$ is the time it takes to reach $\bo$. In particular, $\Sigma$ 
is a manifold of dimension $2n$, parametrized by $(y,\xi)$. Next,  the map
\be{C2}
\Sigma \ni (y,t,x',\xi)  \longmapsto \left(y,t,x',-\xi, \partial_t\phi_+, \partial_{x'}\phi_+\right) 
\ee
is smooth  of rank $2n$ at any point. This shows that $\Phi$ is a non-degenerate phase, see \cite[VIII.1]{Treves2}, and that $f\mapsto \Lambda_+f$ is an FIO associated with the Lagrangian given by the r.h.s.\ of \r{C2}. The canonical relation is then given by
\[
C := \left(  y,\xi,    t,x',   \partial_t\phi_+, \partial_{x'}\phi_+     \right)
 ,\quad  (y,t,x',\xi)\in  \Sigma .
\]
Then \r{C} follows from the way $\phi_+$ is constructed by the Hamilton-Jacobi theory. The proof in the $\sigma=-$ case is the same. 

The proof above was done under the assumption that there are no conjugate points on $\gamma_{y_0,\xi_0}(t)$, $0\le t\le \tau_+(y_0,\xi_0)$. To prove the theorem in the general case, let $t_1\in (0,\tau_+(y_0,\xi_0))$ be such that there are no conjugate points on that geodesic for $t_1\le t\le \tau_+(y_0,\xi_0)$. Then each of the terms in \r{o1} extends to a global elliptic FIO mapping initial data at $t=0$ to a solution at $t=t_1$, see e.g., \cite{Duistermaat}. Its canonical relation is the graph of the geodesic flow between those two moments of time (for $\sigma=+$, and with obvious sign changes when $\sigma=-$). 
We can compose this with the local FIO constructed above, and the result is a well defined elliptic  FIO of order $0$ with canonical relation \r{C}. 
\end{proof}

Choose and fix $T>\sup_\Gamma s$, see \r{i1}.  Let $A$ be the ``back-projection'' operator defined in \r{2} and \r{4}. Note that $A$ is always applied to $\chi\Lambda$ below, therefore $\phi=0$ in this case.

\begin{theorem} \label{thm_stab} 
$A\chi\Lambda$ is a zero  order classical \PDO\ in some neighborhood of $\mathcal{K}$ with principal symbol 
\[
\frac12\left( \chi(\gamma_{x,\xi}(\tau_+(x,\xi))) +  \chi(\gamma_{x,\xi}(\tau_-(x,\xi)) )\right).
\] 
If $\mathcal{G}$ satisfies \r{3.3}, then 

(a) $A\chi\Lambda$ is elliptic, 

(b) $A\chi\Lambda$ is a Fredholm operator on $H_D(\mathcal{K})$, and 

(c) there exists a constant $C>0$ so that
\be{F}
\|f\|_{H_D(\mathcal{K})}\le C \|\Lambda f\|_{H^{1}(\mathcal{G})}.
\ee
\end{theorem}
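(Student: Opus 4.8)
The theorem claims that $A\chi\Lambda$ is a classical zero-order $\Psi$DO with an explicit principal symbol, and that under the visibility condition \r{3.3} this operator is elliptic, Fredholm, and yields the stability estimate \r{F}. I would organize the proof around the microlocal structure already extracted in Proposition~\ref{pr_FIO}: $\Lambda = \Lambda_+ + \Lambda_-$, each $\Lambda_\sigma$ an elliptic FIO of order $0$ whose canonical relation is the graph of the map \r{C}. The back-projection $A$, being defined by solving the wave equation backwards from $t=T$ with the recorded boundary data (and $\phi=0$ here, as noted), is itself an FIO whose canonical relation runs the geodesic flow in reverse. The key structural fact to exploit is that $A$ is, up to the boundary restriction, a microlocal parametrix for the forward solution operator, so that $A\Lambda_\sigma$ composes the outgoing flow with its inverse and lands back on the \emph{diagonal} in $T^*\mathcal{K}\setminus 0$. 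Because the canonical relation of each $A\Lambda_\sigma$ is the graph of the identity, each composition $A\chi\Lambda_\sigma$ is a $\Psi$DO, and the cross terms $A\chi\Lambda_+$ acting on the $-$ singularity (and vice versa) are smoothing, since the two canonical relations meet the diagonal transversally only through the matching $\sigma$. Summing, $A\chi\Lambda$ is a $\Psi$DO of order $0$.

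\smallskip
\noindent\textbf{Computing the symbol.} Next I would compute the principal symbol. The cutoff $\chi$ enters as multiplication on the boundary, so along the branch $\sigma$ it contributes the factor $\chi$ evaluated at the exit point $\gamma_{x,\xi}(\tau_\sigma(x,\xi))$ of the geodesic, this being the base point appearing in \r{C}. The amplitude factors from $\Lambda_\sigma$ and from $A$ must be tracked through the transport equation \r{tr}; the crucial input is the normalization $a_+ = a_- = 1/2$ at $t=0$ established in the proof of Proposition~\ref{pr_FIO}. Since $A$ inverts the forward propagation microlocally, the transport amplitudes along a closed flow-out cancel except for this initial factor $1/2$, and the two branches add, producing exactly
\[
\tfrac12\left( \chi(\gamma_{x,\xi}(\tau_+(x,\xi))) + \chi(\gamma_{x,\xi}(\tau_-(x,\xi))) \right).
\]
Here I would lean on the fact, recorded in the discussion preceding the theorem, that the two split singularities start with equal amplitudes and that $A$ is energy-preserving in the relevant norm, so no extra Jacobian or half-density factors survive beyond the $1/2$.

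\smallskip
\noindent\textbf{Ellipticity, Fredholmness, and stability.} With the symbol in hand, part (a) is immediate: condition \r{3.3} says that for every $(x,\xi)\in S^*\mathcal{K}$ at least one branch $\sigma$ has its exit point in $\mathcal{G}'$, where $\chi \equiv 1$; since $\chi\ge 0$, the symbol is bounded below by $1/2$, hence nonvanishing, so $A\chi\Lambda$ is elliptic of order $0$. For part (b), an elliptic $\Psi$DO of order $0$ on the compact set $\mathcal{K}$ admits a parametrix $B$ with $BA\chi\Lambda = \Id - R$, $R$ smoothing, so $A\chi\Lambda$ is Fredholm on $H_D(\mathcal{K})$. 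Finally, part (c) follows from the elliptic a priori estimate: ellipticity gives $\|f\|_{H_D(\mathcal{K})} \le C\|A\chi\Lambda f\|_{H_D(\mathcal{K})} + C\|f\|_{H^{-N}}$, and since $A$ maps $H^1(\mathcal{G})$ continuously into $H_D(\Omega)$ (as stated after \r{4}), the right side is controlled by $\|\Lambda f\|_{H^1(\mathcal{G})}$; the lower-order term is absorbed using the uniqueness from Theorem~\ref{thm_uniq}, which forces the null space of $A\chi\Lambda$ on $H_D(\mathcal{K})$ to be trivial, upgrading the Fredholm estimate to the clean bound \r{F}.

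\smallskip
\noindent\textbf{Main obstacle.} The hard part will be the rigorous composition of $A$ with $\Lambda_\sigma$ near points where the backward-propagation parametrix for $A$ must be matched to the FIO representation of $\Lambda$ at the boundary, together with the clean verification that the off-diagonal compositions $A\chi\Lambda_{\mp}$ are genuinely smoothing rather than merely lower order. Controlling the amplitudes through the boundary restriction and the transport equation \r{tr}, so that the cancellation leaving only the factor $1/2$ is exact and not merely asymptotic, is the delicate bookkeeping step; the analysis of \cite{SU-JFA09} on recovering singularities from such back-projections is what I would invoke to make this precise.
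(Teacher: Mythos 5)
Your overall architecture coincides with the paper's --- reduce to Proposition~\ref{pr_FIO}, show $A\chi\Lambda$ is a zero order \PDO, read off the symbol from the $1/2$ normalization of the amplitudes, then get (a)--(c) from ellipticity, injectivity via Theorem~\ref{thm_uniq}, and the standard compactness absorption (the paper cites \cite[Proposition~V.3.1]{Taylor-book0}) --- but the central analytic step is missing, and the justification you offer for it would not hold up. The operator $A$ is the solution operator of a \emph{backward mixed initial--boundary value problem}; it is not a priori a graph-type FIO inverting the flow. Dirichlet data singular at a hyperbolic boundary point $(t_0,y_0)$ generically produces solution singularities along \emph{both} bicharacteristics over that point: the reversed incoming geodesic (the one you want) \emph{and} the reflected one, which then keeps reflecting inside the cylinder. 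If that reflected branch were present, $A\chi\Lambda_\pm$ would contain, besides a \PDO, a nontrivial graph FIO (the reflection), and $A\chi\Lambda$ would not be a \PDO\ at all --- the first assertion of the theorem would fail. Your stated reason for discarding such terms, that ``the two canonical relations meet the diagonal transversally only through the matching $\sigma$,'' is not a mechanism: a composition of graph FIOs whose composed graph lies off the diagonal is a nonsmoothing, singularity-moving operator, not a negligible one. What actually kills the reflected branch is a structural fact you never invoke: since $T>\sup_\Gamma s$ and $\supp\chi\subset\mathcal{G}$, the data $\chi\Lambda f$ vanishes near $t=T$, and \r{2} is solved with zero Cauchy data at $t=T$ (and $\phi=0$); hence the backward solution vanishes identically near $t=T$, which is incompatible with the presence of a reflected singularity. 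The paper makes this rigorous by constructing a one-phase parametrix $v$ with phase $\phi_+$ and amplitude $b$ equal to $\chi a$ on $\R\times\bo$, and then observing that $w:=u_+-v$ solves the wave equation modulo $C^\infty$, has smooth boundary data, and vanishes near $t=T$, hence is smooth. Without this (or an equivalent) argument your proof does not establish even that $A\chi\Lambda$ is a \PDO.

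A few smaller points. Your symbol computation leans on ``energy preservation'' and cancellation of Jacobian factors; the actual mechanism is more elementary and more robust: the principal amplitude $b_0$ of the back-projected wave satisfies the \emph{same} linear homogeneous transport equation \r{tr} along each bicharacteristic as $a_+^{(0)}$, with boundary value $\chi a_+^{(0)}$ at the exit time, so by linearity $b_0$ equals $a_+^{(0)}$ multiplied by the constant $\chi(\gamma_{x,\xi}(\tau_+(x,\xi)))$ along that bicharacteristic, whence $b_0|_{t=0}=\frac12\chi(\gamma_{x,\xi}(\tau_+(x,\xi)))$. Also, the geometric optics representation you compose is valid only in the absence of conjugate points; for a general non-trapping metric you need the paper's additional step (partitioning $[0,T]$ into subintervals free of conjugate points, or an Egorov-type argument) before asserting the \PDO\ property globally. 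Finally, in (c) the absorption of the lower-order term should run through injectivity of $\chi\Lambda$ (which is what Theorem~\ref{thm_uniq} together with \r{3.3} provides), not injectivity of $A\chi\Lambda$: the latter does not follow directly, since $A$ could in principle annihilate part of the range of $\chi\Lambda$; one first bounds $\|A\chi\Lambda f\|_{H_D}$ by $\|\chi\Lambda f\|_{H^1}$ using the mapping property of $A$ from \cite{LasieckaLT}, and then applies the absorption lemma to $\chi\Lambda$.
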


\begin{remark}\label{remark_3}
By \cite[Proposition~3]{SU-JFA09}, condition \r{3.3}, with $\mathcal{G}$ replaced by its closure, is  a necessary condition for stability in any pair of Sobolev spaces. In particular, $c^{-2}g$ has to be non-trapping for stability. Indeed, then the proof below shows that $A\chi\Lambda$ will be a smoothing operator on some non-empty open conic  subset of $T^*\mathcal{K}\setminus 0$.  
\end{remark}

\begin{remark}
Note that $\Lambda : H_D(\mathcal{K}) \to H^1([0,T]\times \bo)$ is bounded. This follows for example from Proposition~\ref{pr_FIO}.
\end{remark}
\begin{proof}
We will use the geometric optics construction in the proof of Proposition~\ref{pr_FIO}, using the notation there.

To construct a parametrix for $A\chi \Lambda f$, we apply a geometric optic construction again, using the two characteristic roots $\pm c|\xi|_g$. The boundary data $\Lambda_+f$ have a wave front set in a small conic neighborhood of  $((t_0,y_0'),(1, \eta_0'))$. Note that $\eta_0^n\not=0$ because geodesics issued from $\mathcal{K}$ cannot be tangent to $\bo$. Then for the solution $v$ of \r{2} with $h=\Lambda_+f$, 
we can apply the geometric optics construction above, but now with initial condition on $\R\times\bo$, to get  two types of singularities starting from that one. The first one propagates along the geodesics close to $\gamma_{x_0,\xi_0}$ in the opposite direction. The second one propagates along the geodesic close to the one issued from $((t_0,y_0), (\eta',-\eta^n))$, that is transversal to $\bo$. This ray is in fact a reflected  $\gamma_{x_0,\xi_0}$.  
By the propagation of singularities results, those singularities  stay on those geodesics until they reach $\bo$ again, then reflect by law of geometric optics, etc., i.e., they propagate along the broken geodesics issued from a neighborhood of that point. Near $t=T$ however, the solution to \r{2}, where $\phi=0$, is zero because we have zero Cauchy data, and $h=\chi \Lambda f=0$ for $t$ close to $T$. This shows that the second types of singularities do not exist; and  in our parametrix construction, we need to work with the first one only. 

We look for a parametrix of the solution of the wave equation \r{2} with zero Cauchy data at $t=T$ and boundary data $\chi\Lambda_+ f$ in the form
\[
v(t,x) = (2\pi)^{-n}  \int e^{\i\tilde \phi(t,x,\xi)} b(x,\xi,t) \hat f(\xi)\, \d \xi.
\]
The arguments above show that $\tilde\phi=\phi_+$. Next, for $x\in\bo$, we have $b=\chi a$. We need to find $b$ at $t=0$. 
The amplitude $b$ satisfies the same transport equation as in the proof of Proposition~\ref{pr_FIO} but with initial condition at $\R\times\bo$. In particular, it is a classical amplitude of order $0$. Let $b_0$ be its principal part. Then $b_0$ satisfies \r{tr}, also  satisfied by $a^{(0)}_+$,  that is a linear homogeneous ODE along the bicharacteristic close to $(\gamma_{x_0,\xi_0}, \dot \gamma_{x_0,\xi_0})$. Therefore, $b_0$ is a linear function of its initial condition at $\R\times\bo$. 
If we assume for a moment that $\chi=1$, then we would get $b_0=a^{(0)}_+$, therefore, $b_0=1/2$ for $t=0$. 
Therefore,  we get that $b_0(x,\xi)|_{t=0}$ is given by the value of $\chi/2$ at the exit point of $\gamma_{x,\xi}$ on $\bo$ because that value is the initial condition of the transport equation on that bicharacteristic. 

The arguments above reveal the geometry of the singularities but some of them are not needed for the formal proof. One can define $v$ as above, localized near the bicharacteristic issued from $(x_0,\xi_0)$, 
and let $u_+$ be the solution of \r{2} with $\phi=0$ and $h=\chi \Lambda_+f$. Then one easily checks that $w := u_+-v$ solves the wave equation modulo smooth terms, with smooth boundary condition, and that  $w=0$ near  $t=T$; and is therefore smooth. 

In the same way one treats the $\sigma=-$ term. This proves the theorem assuming no conjugate points in $\Omega$.

In the general case, we can apply those arguments step by step, in intervals $[0,t_1]$, then $[t_1,t_2]$, etc., short enough so that there are no conjugate points on the corresponding geodesic segments. After the first step, we get $(u,u_t)$ at $t=t_1$. Then we construct a parametrix from $t=t_1$ to $t=t_2$ using a new phase function.  Note that now, when $\sigma=+$, for example,  $u_t|_{t=t_1}$ does not vanish anymore. On the other hand, $(u,u_t)|_{t=t_1}$ is Cauchy data of a solution which singularities do not travel in two opposite directions, and we will still get one term only, that is an analogue of the $\sigma=+$ one in \r{o1}. Then we reach the boundary and apply the result above. Next, step by step, we go back to the hyperplane $t=0$. 
An alternative way is to apply the Egorov's theorem from $t=0$ to $t=\tilde t$, instead of the partition of the time interval, where $\tilde t$ is such that there are no conjugate points on the bicharacteristic issued from $(x_0,\xi_0)$ from $\tilde t$ to $\tau_+(x_0,\xi_0)$; and on that segment, we use the arguments above.  

This proves the first statement of the theorem. 

Parts (a), (b) follows immediately from the ellipticity of $A\chi\Lambda$ that is guaranteed by \r{3.3}.

To prove part (c), note first that the ellipticity of $A\chi\Lambda$  and the mapping property of $A$, see  \cite{LasieckaLT}, imply the estimate
\[
\|f\|_{H_D(\mathcal{K})}\le C\left(\|\chi\Lambda f\|_{H^1}+  \|f\|_{L^2 }     \right).
\]
By Theorem~\ref{thm_uniq}, and \r{3.3}, $\chi\Lambda$ is injective on $H_D(\mathcal{K})$. By \cite[Proposition~V.3.1]{Taylor-book0}, one gets 
estimate \r{F} with a  constant $C>0$ possibly different than the one above. 
\end{proof}

\begin{corollary}\label{cor_1}  
Let $g$ be Euclidean outside $\Omega$, and let $\bo$ be strictly convex. Then if $\Lambda f=0$ on $[0,T]\times\bo$ for some  $f\in H_D(\Omega)$, with $T>T(\Omega)/2$, then $f=0$.  
\end{corollary}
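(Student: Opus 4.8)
The plan is to derive the corollary directly from the uniqueness Theorem~\ref{thm_uniq} by checking that full-boundary data, i.e.\ $\Gamma=\bo$ and $s\equiv T$, already fulfils the geometric hypothesis \r{i2} once $T>T(\Omega)/2$. First I would note that $T>T(\Omega)/2$ is non-vacuous only when $T(\Omega)<\infty$, so $(\Omega,c^{-2}g)$ is non-trapping and every maximal geodesic segment lying in $\bar\Omega$ with endpoints on $\bo$ has length at most $T(\Omega)$. The whole problem then reduces to the purely metric statement that $\dist(x,\bo)\le T(\Omega)/2$ for every $x\in\Omega$; granting this, the nearest boundary point $z$ of any $x\in\mathcal{K}:=\bar\Omega$ satisfies $\dist(x,z)=\dist(x,\bo)\le T(\Omega)/2<T=s(z)$, which is exactly \r{i2} (and at boundary points $x$ it holds trivially with $z=x$).

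To prove the metric inequality, I would fix $x\in\Omega$, let $z_0\in\bo$ realize $\ell:=\dist(x,\bo)=\dist(x,z_0)$, and take a minimizing unit-speed geodesic $\gamma$ from $x$ to $z_0$; by minimality (and strict convexity of $\bo$) it meets $\bo$ only at $z_0$. Extend $\gamma$ through $x$ in the opposite direction: since $x$ is interior and the metric is non-trapping, the extension stays in $\bar\Omega$ for a positive time and exits at some $z_1\in\bo$ at arclength $\ell_1$ from $x$. The concatenation is a single geodesic segment contained in $\bar\Omega$ with both endpoints on $\bo$, so $\ell+\ell_1\le T(\Omega)$. Because $\gamma$ realizes the distance to the boundary, $\ell\le\dist(x,z_1)\le\ell_1$, whence $2\ell\le\ell+\ell_1\le T(\Omega)$, i.e.\ $\ell\le T(\Omega)/2$. (The Euclidean ball of radius $R$, where $T(\Omega)=2R$ and the centre sits at distance $R=T(\Omega)/2$ from $\bo$, shows the bound is sharp, matching the classical threshold $T>R$.)

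Finally I would assemble the pieces: condition \r{i2} holds for all $x\in\bar\Omega$ with $\mathcal{K}=\bar\Omega$, the evenness of $u$ in $t$ needed to pass from half-line data to a unique-continuation argument is already built into the proof of Theorem~\ref{thm_uniq}, so that theorem applies and gives $f=0$ in $\Omega$; since $f\in H_D(\Omega)$ vanishes on $\bo$ in the trace sense, $f=0$. The main point requiring care is the metric inequality $\dist(x,\bo)\le T(\Omega)/2$, together with checking that Theorem~\ref{thm_uniq} still applies when $\supp f$ is allowed to reach $\bo$ (that is, $\mathcal{K}=\bar\Omega$ rather than a compact subset of $\Omega$); the localized nature of that proof, which fixes an interior point $x_0$ and shows $f=0$ near $x_0$, makes this extension routine, the only genuinely geometric input being the extension-of-geodesics argument above.
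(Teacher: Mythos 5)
Your proof is correct and follows the route the paper intends: Corollary~\ref{cor_1} is stated without a separate proof, as a direct consequence of Theorem~\ref{thm_uniq}, obtained by verifying condition \r{i2} for $\Gamma=\bo$, $s\equiv T$, which is precisely your metric inequality $\dist(x,\bo)\le T(\Omega)/2$ (one of the two halves of a maximal geodesic through $x$ must exit $\bar\Omega$ within time $T(\Omega)/2$). Your two supplementary checks --- that $T>T(\Omega)/2$ is non-vacuous only when $T(\Omega)<\infty$, so the needed geodesic extension does exit, and that the localized proof of Theorem~\ref{thm_uniq} (whose Lemma~\ref{lemma_claim} already allows $\supp f\subset\bar\Omega\setminus B_r(p)$) tolerates support reaching $\bo$ --- are exactly the right points to address and are handled correctly.
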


\section{Thermoacoustic tomography and integral geometry} \label{sec_ig} 
If $P=-\Delta$, and if $n$ is odd, the solution of the wave equation can be expressed in terms of spherical means, as it is well known. Then the problem can be formulated as an integral geometry problem --- recover $f$ from integrals over spheres centered at $\bo$, with radii in $[0,T]$, and this point of view has been exploited a lot in the literature. One may attempt to apply the same approach in the variable coefficients case; then one has to integrate over geodesic spheres. This has two drawbacks. First, those integrals represent the leading order terms of the solution operator only, not the whole solution. That would still be enough for constructing a parametrix however but not the Neumann series solution in Theorem~\ref{thm2.1}.  The second problem is that the geodesic spheres become degenerate in presence of caustics. The wave equation viewpoint that we use in this paper is not sensitive to caustics. We still have to require that the metric be non-trapping in some of our theorems. By the remark following Theorem~\ref{thm_stab} however, this is a necessary condition for stability. On the other hand, it is not needed for the uniqueness result as long as \r{i2} is satisfied. 

\section{Acknowledgments.} The authors thank Peter Kuchment for  his comments on a preliminary version of this work. We also thank the referees whose suggestions helped improved the exposition.

\appendix
\section{Unique continuation for the wave equation}
We recall here a Holmgren's type of theorem for the wave equation $(\partial_t^2+P)u=0$ due mainly to Tataru.  While this theorem is well known and used, and follows directly from the results cited below, we did not find it clearly formulated in the literature. 

\begin{theorem}\label{thm_uq}
Let $P$ be the differential operator in $\R^n$ as in the Introduction. Assume that $u\in H^1_{\rm loc}$ satisfies 
\[
(\partial_t^2+P)u=0
\]
in a neighborhood of $[-T,T]\times \{x_0\}$, with some $T>0$, $x_0\in \R^n$.  Then 
\[
u(t,x)=0 \quad \mbox{for} \quad |t|+\dist(x_0,x) \le T.
\]
\end{theorem}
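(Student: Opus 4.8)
The plan is to deduce this variable-coefficient Holmgren/Tataru statement from the known sharp unique continuation results for second-order operators with time-independent coefficients, packaging them so that the conclusion is expressed through the Riemannian distance $\dist$ of the metric $c^{-2}g$. The operator $\partial_t^2 + P$ is a second-order hyperbolic operator whose principal symbol is $-\tau^2 + c^2|\xi|_g^2$ (up to sign conventions), and the coefficients are smooth and independent of $t$. This is exactly the setting of Tataru's theorem on unique continuation across non-characteristic hypersurfaces, sharpened by H\"ormander and others, which gives unique continuation from a point in terms of the natural sub-unit (here, Riemannian) distance. So the first step is simply to identify the ``time function'' and the associated Lorentzian/Riemannian geometry: the relevant pseudo-convex foliation is by the level sets of $t \mapsto t^2$ shifted so that the forward and backward light cones are measured by $\dist(x_0,x)$. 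I would state the geometric input precisely: the coefficient of $\partial_t^2$ is $1$ and the spatial principal part is $c^2|\xi|_g^2$, so the characteristic speed at $x$ in direction $\xi$ is exactly $c(x)|\xi|_g / |\xi|$, which is why the correct distance controlling propagation is that of $c^{-2}g$.

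Concretely, I would invoke Tataru's unique continuation theorem (as refined in H\"ormander's treatment and in Tataru's later work removing analyticity assumptions in the time variable, which applies since the coefficients are $t$-independent and hence trivially analytic in $t$). The statement gives: if $(\partial_t^2 + P)u = 0$ near a point and $u$ vanishes on one side of an oriented, suitably pseudo-convex hypersurface through that point, then $u$ vanishes on a full neighborhood on the other side. One then propagates this conclusion by a standard foliation/continuity argument. The cleanest way is to fix the point $(t,x)$ with $|t| + \dist(x_0,x) \le T$ and construct a one-parameter family of pseudo-convex surfaces (level sets of a function comparing $|t|$ against $\dist(x_0,\cdot)$) interpolating between a small neighborhood of the segment $[-T,T]\times\{x_0\}$, where $u$ is known to vanish, and a neighborhood of $(t,x)$. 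Because the coefficients are $t$-independent, the relevant convexity condition holds precisely in the double cone $\{|t| + \dist(x_0,x) < T'\}$ for every $T' \le T$, and the connectedness of the region lets us chain finitely many local unique continuation steps.

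The foliation argument is where I would spend the most care. I would introduce the function $\psi(t,x) = \dist(x_0,x)^2 - (T-|t|)^2$ (or a smoothed variant avoiding the corner at $t=0$ and the singularity of $\dist^2$ at $x_0$) and check that its level sets are non-characteristic and pseudo-convex for $\partial_t^2 + P$ in the region of interest; this is the analytic heart, and it reduces to the elementary fact that geodesics of $c^{-2}g$ realize the characteristic speed, so the light cone of the operator is exactly $\{|t| = \dist(x_0,x)\}$. Once the convexity is verified, one applies Tataru's theorem along the foliation and uses compactness of the segment $[-T,T]\times\{x_0\}$ to conclude that the vanishing set is open and closed in the double cone, hence equals it.

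The main obstacle I anticipate is the verification of the pseudo-convexity condition near the two degenerate loci: the vertex-type corner at $t=0$ (where $|t|$ is not smooth) and the cusp at $x = x_0$ (where $\dist(x_0,\cdot)$ is singular). Handling the corner at $t=0$ is exactly why the hypothesis is stated on the \emph{symmetric} interval $[-T,T]$ and why the solution is often extended evenly in $t$; treating the distance singularity at $x_0$ requires working slightly away from $x_0$ and taking limits, or replacing $\dist^2$ by a smooth strictly convex approximant. Both are technical but standard; the genuinely substantive content is entirely contained in Tataru's theorem, so I would keep the proof short, cite Tataru (and H\"ormander) for the local step, and present the foliation chaining as the only original bookkeeping, noting that the identification of $\dist$ with the metric $c^{-2}g$ is what makes the stated vanishing cone sharp.
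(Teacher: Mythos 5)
Your proposal rests on the same essential ingredient as the paper's proof---Tataru's unique continuation theorem, applicable here because the coefficients are $t$-independent and hence trivially analytic in $t$---but the two arguments divide the labor differently. The paper's proof is a pure citation chain: Robbiano \cite{Robbiano91} proved the global cone statement with $\dist$ dilated by an unspecified constant $K>0$; H\"ormander \cite{Hormander92} sharpened this to $K=\sqrt{27/23}$ and, crucially, proved that the constant in the \emph{global} theorem (Corollary~7 there) can be taken equal to the constant in the \emph{local} unique continuation theorem; Tataru \cite{tataru95,tataru99} then shows that the local constant is $K=1$ (unique continuation across any non-characteristic surface), so the theorem follows from H\"ormander's Corollary~7 with no further geometric work. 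You propose instead to re-derive the local-to-global step by hand, via a foliation by level sets of $\psi(t,x)=\dist(x_0,x)^2-(T-|t|)^2$ and a chaining/connectedness argument; that is, you are in effect re-proving H\"ormander's Corollary~7 rather than citing it.

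Two points in that hand-rolled global step are genuine gaps, not just routine bookkeeping. First, the convexity condition: you say you will check that the level sets are ``non-characteristic and pseudo-convex'' and that ``the relevant convexity condition holds precisely in the double cone.'' Classical strong pseudo-convexity (what the non-analytic H\"ormander theory requires) \emph{fails} for these surfaces as they approach the characteristic light cone $\{|t|+\dist(x_0,x)=T\}$; this failure is exactly why Robbiano and H\"ormander could only reach $K>1$. The point of Tataru's theorem is that for $t$-independent coefficients the pseudo-convexity requirement in the time direction is dropped, and one needs only the surface to be non-characteristic (plus a pseudo-convexity condition in the spatial variables alone). So your verification must be carried out against Tataru's hypotheses, not the classical ones; as written, the claim is either false (classical pseudo-convexity) or unproved (Tataru's partial condition). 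Second, smoothness of the distance function: you handle the singularity at $x=x_0$ and the corner at $t=0$, but $\dist(x_0,\cdot)^2$ is in general not smooth on the cut locus of $x_0$, and Theorem~\ref{thm_uq} assumes nothing (no non-trapping, no convexity, no bound on $T$ relative to the injectivity radius) that keeps the cut locus out of the double cone. A single global foliation by level sets of $\psi$ is therefore not well defined. The standard repair is to chain in small steps along minimizing geodesics, using locally smooth distance functions from nearby centers, together with a compactness argument at the vertices $(\pm T,x_0)$---which is, in substance, the content of \cite[Corollary~7]{Hormander92} that the paper cites precisely to avoid redoing this.
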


\begin{proof}
If $P$ has analytic coefficients, this is Holmgren's theorem. In the non-analytic coefficients case, a version of this theorem was proved by Robbiano \cite{Robbiano91} with $\rho$ replaced by $K\rho$ with an unspecified constant $K>0$. It is derived there from a local unique continuation theorem across a surface that is ``not too close to being characteristic". In \cite{Hormander92}, H\"ormander showed that one can choose $K= \sqrt{27/23}$, in both the local theorem \cite[Thm~1]{Hormander92}   and the  global theorem \cite[Corollary~7]{Hormander92}. Moreover, he showed that $K$ in the global one can be chosen to be the same as the $K$ in the local one. Finally, Tataru \cite{tataru95, tataru99} proved a unique  continuation result that implies unique continuation across any non-characteristic surface. This shows that actually  $K=1$ in H\"ormander's work, and the theorem above then follows from \cite[Corollary~7]{Hormander92}.
\end{proof}


\begin{thebibliography}{10}

\bibitem{AgrKuchKun2008}
M.~Agranovsky, P.~Kuchment, and L.~Kunyansky.
\newblock On reconstruction formulas and algorithms for the thermoacoustic
  tomography.
\newblock {\em to appear in Photoacoustic Imaging and Spectroscopy, CRC Press},
  2009.

\bibitem{Duistermaat}
J.~J. Duistermaat.
\newblock {\em Fourier integral operators}, volume 130 of {\em Progress in
  Mathematics}.
\newblock Birkh\"auser Boston Inc., Boston, MA, 1996.

\bibitem{finchPR}
D.~Finch, S.~K. Patch, and Rakesh.
\newblock Determining a function from its mean values over a family of spheres.
\newblock {\em SIAM J. Math. Anal.}, 35(5):1213--1240 (electronic), 2004.

\bibitem{FinchRakesh08}
D.~Finch and Rakesh.
\newblock Recovering a function from its spherical mean values in two and three
  dimensions.
\newblock {\em to appear in Photoacoustic Imaging and Spectroscopy, CRC Press},
  2009.

\bibitem{Haltmeier04}
M.~Haltmeier, O.~Scherzer, P.~Burgholzer, and G.~Paltauf.
\newblock Thermoacoustic computed tomography with large planar receivers.
\newblock {\em Inverse Problems}, 20(5):1663--1673, 2004.

\bibitem{Haltmeier05}
M.~Haltmeier, T.~Schuster, and O.~Scherzer.
\newblock Filtered backprojection for thermoacoustic computed tomography in
  spherical geometry.
\newblock {\em Math. Methods Appl. Sci.}, 28(16):1919--1937, 2005.

\bibitem{Hormander92}
L.~H{\"o}rmander.
\newblock A uniqueness theorem for second order hyperbolic differential
  equations.
\newblock {\em Comm. Partial Differential Equations}, 17(5-6):699--714, 1992.

\bibitem{Hristova08}
Y.~Hristova.
\newblock Time reversal in thermoacoustic tomography - an error estimate.
\newblock {\em preprint}, 2008.

\bibitem{HristovaKu08}
Y.~Hristova, P.~Kuchment, and L.~Nguyen.
\newblock Reconstruction and time reversal in thermoacoustic tomography in
  acoustically homogeneous and inhomogeneous media.
\newblock {\em Inverse Problems}, 24:055006, 2008.

\bibitem{Kruger03}
R.~A. Kruger, W.~L. Kiser, D.~R. Reinecke, and G.~A. Kruger.
\newblock Thermoacoustic computed tomography using a conventional linear
  transducer array.
\newblock {\em Med Phys}, 30(5):856--860, May 2003.

\bibitem{Kruger99}
R.~A. Kruger, D.~R. Reinecke, and G.~A. Kruger.
\newblock Thermoacoustic computed tomography--technical considerations.
\newblock {\em Med Phys}, 26(9):1832--1837, Sep 1999.

\bibitem{KuchmentKun08}
P.~Kuchment and L.~Kunyansky.
\newblock Mathematics of thermoacoustic tomography.
\newblock {\em European J. Appl. Math.}, 19(2):191--224, 2008.

\bibitem{LasieckaLT}
I.~Lasiecka, J.-L. Lions, and R.~Triggiani.
\newblock Nonhomogeneous boundary value problems for second order hyperbolic
  operators.
\newblock {\em J. Math. Pures Appl. (9)}, 65(2):149--192, 1986.

\bibitem{Patch04}
S.~K. Patch.
\newblock Thermoacoustic tomography -- consistency conditions and the partial
  scan problem.
\newblock {\em Physics in Medicine and Biology}, 49(11):2305--2315, 2004.

\bibitem{Robbiano91}
L.~Robbiano.
\newblock Th\'eor\`eme d'unicit\'e adapt\'e au contr\^ole des solutions des
  probl\`emes hyperboliques.
\newblock {\em Comm. Partial Differential Equations}, 16(4-5):789--800, 1991.

\bibitem{SU-JFA09}
P.~Stefanov and G.~Uhlmann.
\newblock Linearizing non-linear inverse problems and an application to inverse
  backscattering.
\newblock {\em J. Funct. Anal.}, to appear, 2009.

\bibitem{tataru95}
D.~Tataru.
\newblock Unique continuation for solutions to {PDE}'s; between {H}\"ormander's
  theorem and {H}olmgren's theorem.
\newblock {\em Comm. Partial Differential Equations}, 20(5-6):855--884, 1995.

\bibitem{tataru99}
D.~Tataru.
\newblock Unique continuation for operators with partially analytic
  coefficients.
\newblock {\em J. Math. Pures Appl. (9)}, 78(5):505--521, 1999.

\bibitem{Taylor-book0}
M.~E. Taylor.
\newblock {\em Pseudodifferential operators}, volume~34 of {\em Princeton
  Mathematical Series}.
\newblock Princeton University Press, Princeton, N.J., 1981.

\bibitem{Treves}
F.~Tr{\`e}ves.
\newblock {\em Introduction to pseudodifferential and {F}ourier integral
  operators. {V}ol. 1}.
\newblock Plenum Press, New York, 1980.
\newblock Pseudodifferential operators, The University Series in Mathematics.

\bibitem{Treves2}
F.~Tr{\`e}ves.
\newblock {\em Introduction to pseudodifferential and {F}ourier integral
  operators. {V}ol. 2}.
\newblock Plenum Press, New York, 1980.
\newblock Fourier integral operators, The University Series in Mathematics.

\bibitem{XuWang06}
M.~Xu and L.~V. Wang.
\newblock Photoacoustic imaging in biomedicine.
\newblock {\em Review of Scientific Instruments}, 77(4):041101, 2006.

\bibitem{XuKA}
Y.~Xu, P.~Kuchment, and G.~Ambartsoumian.
\newblock Reconstructions in limited view thermoacoustic tomography,.
\newblock {\em Medical Physics}, 31(4):724--733, 2004.

\bibitem{XuWKA}
Y.~Xu, L.~Wang, P.~Kuchment, and G.~Ambartsoumian.
\newblock Limited view thermoacoustic tomography.
\newblock In {\em Photoacoustic imaging and spectroscopy, L. H. Wang (Editor)},
  chapter~6, pages 61--73. CRC Press, 2009.

\end{thebibliography}
%


\end{document}